\numberwithin{equation}{section}
\title{A Characterization of Perfect Strategies for Mirror Games}
\author{Sizhuo Yan}
\affiliation{
\institution{Key Lab of Mathematics Mechanization, AMSS }
\department{ University of Chinese Academy of Sciences}
\city{Beijing, 100190}
\country{China}
}
\email{yansizhuo@amss.ac.cn}
\author{Jianting Yang}
\affiliation{
\institution{Key Lab of Mathematics Mechanization, AMSS }
\department{ University of Chinese Academy of Sciences}
\city{Beijing, 100190}
\country{China}
}
\email{yangjianting@amss.ac.cn}
\author{Tianshi Yu}
\affiliation{
	\institution{Key Lab of Mathematics Mechanization, AMSS }
	\department{ University of Chinese Academy of Sciences}
	\city{Beijing, 100190}
	\country{China}
}
\email{yutianshi@amss.ac.cn}
\author{Lihong Zhi}
\affiliation{
\institution{Key Lab of Mathematics Mechanization, AMSS }
\department{ University of Chinese Academy of Sciences}
\city{Beijing, 100190}
\country{China}
}
\email{lzhi@mmrc.iss.ac.cn}
\keywords{Mirror Game, Perfect Commuting Operator Strategies, Noncommutative Nullstellensatz, Noncommutative Gr\"obner basis, Sum of Squares}
\begin{document}

\begin{abstract}
	We associate mirror games with the universal game algebra and use the *-representation to describe  quantum commuting operator strategies.  We provide an algebraic characterization of  whether or not a mirror game has perfect commuting operator strategies. This new characterization  uses  a smaller algebra introduced by Paulsen and others for synchronous games and   the noncommutative Nullstellensatz developed by Cimpric, Helton and collaborators. An algorithm based on noncommutative Gr\"obner basis computation and semidefinite programming is given for certifying  that a given mirror game has no perfect commuting operator strategies.

\end{abstract}
\maketitle

\newtheorem{defi}{Definition}[section]
\newtheorem{thm}{Theorem}[section]
\newtheorem{cor}[thm]{Corollary}
\newtheorem{prop}[thm]{Proposition}
\newtheorem{lem}{Lemma}[section]
\newtheorem{exa}{Example}[section]
\section{Introduction}
Quantum nonlocal games have been  an active area of research for mathematicians, physicists, and computer scientists in past decades.
The violation of Bell inequality has verified the non-locality of quantum mechanics \cite{bell1964einstein}, which  can   be explained in the framework of nonlocal games \cite{cleve2004consequences,palazuelos2016survey}.
A nonlocal game  has two or multiple  players and a verifier. The verifier sends a question to each player separately, and each player sends an answer back to the verifier without communicating with the others.
The verifier  determines whether  the players win for the given questions and answers.
We have a classical strategy if the players can only share classical information.
We have a quantum strategy if we allow the players to share quantum information.
Bell inequality violations have been proved in the CHSH game \cite{clauser1969proposed}, where the winning probability using classical strategies is at most $3/4$, while a quantum strategy using an entangled state shared by two players can achieve a success probability  $\cos^2(\pi/8) \approx 0.85$.
Noncommutative  Positivstellens\"atze  have been used to study nonlocal games in \cite{navascues2008convergent,doherty2008quantum}.

A synchronous game is a nonlocal game with two players called Alice and Bob, where  Alice and Bob are sent the same question and win if and only if they send the same response.  
Paulsen and his collaborators found a simpler  formulation using  a smaller algebra and hard zeroes to study synchronous games in \cite{paulsen2016estimating,helton2019algebras}.  It has been shown
that the  success probability   of a synchronous game is given by the trace of a bilinear function on a smaller algebra, see Theorem 5.5 in  \cite{paulsen2016estimating}, and Theorem 3.2 in \cite{helton2019algebras}.
In \cite{helton2019algebras,watts2021noncommutative,watts20203xor}, they  give algebraic characterizations of perfect quantum commuting operator strategies for a general game using  noncommutative Nullstellens\"atze \cite{cimpric2015real,cimprivc2013noncommutative,cimprivc2014real}
and Positivstellens\"atze \cite{MC2001,H2002,HMS2004,BKP2016}.
Theorem 8.3 and 8.7 in \cite{watts2021noncommutative}  provide a simplified version of the Nullstellensatz theorem for synchronous games.

In \cite{lupini2020perfect}, Lupini, etc. introduce a new class of nonlocal games called imitation games, in which another player's answer completely determines each player's answer.
Any  synchronous game is an imitation game as the players send the same answers for the same questions.
Some imitation games are not synchronous, such as
mirror games, unique games \cite{rao2011parallel}, and variable assignment games \cite{lupini2020perfect}. Lupini, etc.,  associates a C*-algebra with
any imitation game and characterizes  perfect quantum commuting strategies  in terms of the properties of this C*-algebra.

As an interesting  subclass of imitation games, mirror games   include unique games and synchronous games.  Theorem 5.5 in \cite{paulsen2016estimating} for synchronous games has been generalized  to Theorem 6.1  in  \cite{lupini2020perfect} for  mirror games, and a  representation of  perfect quantum commuting strategies  for mirror games in terms of traces is also  given in the paper.
It is natural to ask whether one can  obtain  similar results as  Theorem 8.3 and  8.7 in \cite{watts2021noncommutative} for mirror games. We answer the question in  Theorem \ref{thm3.2}: we provide  an algebraic  characterization of  whether or not a mirror game has perfect commuting operator strategies   based on a noncommutative Nullstellensatz and sums of squares.
This new characterization  uses  a smaller algebra introduced by Paulsen and others for synchronous games and   the noncommutative Nullstellensatz developed by Cimpric, Helton, and  collaborators \cite{cimpric2015real,cimprivc2013noncommutative,cimprivc2014real}.
An example is given to demonstrate  how to use noncommutative  Gr\"obner basis algorithm \cite{mora1986grobner} and semidefinite programming \cite{vandenberghe1996semidefinite} to verify that a given mirror game has  no perfect commuting operator strategies.
It would be interesting to see how to extend these results  to imitation games.

The paper is organized as follows. Section \ref{sec2} introduces some preliminary  results and definitions of nonlocal games. Some background material on classical strategies and quantum strategies of nonlocal games  are included. We also introduce the  universal game algebra and its *-representation.  Section  \ref{sec3} contains our main result on characterizing   whether or not a mirror game has  perfect commuting operator strategies   based on a noncommutative Nullstellensatz and sums of squares. Finally, Section \ref{sec4} shows how to  use  noncommutative  Gr\"obner basis and semidefinite programming to verify that a given mirror game has  no perfect commuting operator strategies. A running example is given  to demonstrate the computations.

\section{Preliminaries}\label{sec2}

A nonlocal game $ \mathcal{G} $ involves a verifier and two players, Alice and Bob. For fixed  non-empty finite sets $ X,Y $ and $ A,B $, there exists a distribution $\mu $ on $ X\times Y $.
After choosing a pair $ (x,y)\in X\times Y $ randomly according to $ \mu(x,y)$, the verifier sends elements $ x $ to Alice and $ y $ to Bob as questions. Alice and Bob  send the verifier corresponding answers $ a\in A $ and $ b\in B $.
After receiving an answer from each player, the verifier  evaluates the  scoring function
\begin{equation}
\lambda:~X\times Y\times A\times B\longrightarrow \{0,1\}
\end{equation}
If $ \lambda(x,y, a,b)=1$, we say Alice and Bob win; otherwise, they lose the game.
Alice and Bob know the sets $ X, Y, A, B $ and the scoring function $\lambda$,
but they can't communicate during the game.  Alice and Bob can make some arrangements before the game starts.

A deterministic strategy for the players consists of two functions:
\begin{equation}
a:X\longrightarrow A, \, \,  b:Y\longrightarrow B,
\end{equation}
and Alice sends $ a(x) $ to the verifier if she receives $x$, and Bob sends $ b(y) $ to the verifier if he receives $y$.
Given a deterministic strategy, the players win the game $ \mathcal{G} $ with an expectation
\begin{equation}
\sum_{x,y}\mu(x,y)\lambda(x,y,a(x),b(y)).
\end{equation}

We can also give a probabilistic strategy for $ \mathcal{G} $ as follows: for each pair $ (x,y)\in X\times Y$, let Alice and Bob have mutually independent distributions $ p_{x, a}, q_{y,b}$ for $a\in A, b\in B$. When the players receive the questions $(x,y)$, Alice sends the answer $ a $ to the verifier with probability $ p_{x, a} $ and Bob sends the answer $ b $ to the verifier with probability $ q_{y,b}$.
The winning expectation is
\begin{equation}
\sum_{x,y,a,b}\mu(x,y)p_{x,a}q_{y,b}\lambda(x,y,a,b).
\end{equation}

All deterministic strategies and probabilistic strategies are collectively referred to as classical strategies. We record the set of all classical strategies as $ C_c $,  which is  a closed set. Notice that any probabilistic strategy can be expressed as a convex combination of deterministic strategies so that the maximal winning expectation of a nonlocal  game  $\mathcal{G} $  with classical strategies is always obtained by some deterministic strategy. The  classical value of $\mathcal{G} $ is defined as  the maximal winning expectation
\begin{equation}
\omega_c(\mathcal{G})=\max_{a,b}\sum_{x,y}\mu(x,y)\lambda(x,y,a(x),b(y)).
\end{equation}

We use the Dirac notation in quantum information to represent the unit vector (a state) in Hilbert space. 
If  Alice and Bob are allowed to  share a quantum  entangled state $ |\psi\rangle\in\mathcal{H}_{A}\otimes\mathcal{H}_{B} $, where both $\mathcal{H}_{A}$ and $\mathcal{H}_{B}$ are finite-dimensional Hilbert space, and then they can have a quantum strategy described as follows:
\begin{itemize}
	\item If Alice receives $ x $, she performs the projection-valued measure
	(PVM)  $P_{x,a} $ on  $ \mathcal{H}_{A} $ part of  $ |\psi\rangle $ and sends the measurement result $ a $  to the verifier.
	\item If Bob receives $ y $, he performs the PVM   $Q_{y,b}$ on $ \mathcal{H}_{B} $ part of $ |\psi\rangle $ and sends the measurement result $ b $ to the verifier.
\end{itemize}
If we replace PVM by POVM (positive operator-valued measure), the results below will also hold \cite{paulsen2015quantum,fritz2014operator}.

We record the set of all finite-dimensional quantum strategies as $ C_q $.
If we drop the requirement of finite dimension, i.e., $\mathcal{H}_{A},\mathcal{H}_{B} $  can be  infinite-dimensional Hilbert spaces, then we get a set of  quantum strategies denoted as $ C_{qs} $. Slofstra  \cite{slofstra2019set,scholz2008tsirelson} has proved that neither $ C_q $ nor $ C_{qs} $ is a closed set. We denote the closure of $ C_q$ as $C_{qa}$.
It is evident that
\[
C_c\subseteq C_q\subseteq C_{qs}\subseteq C_{qa}
\]
Each of the above $"\subseteq"$ is strictly inclusive. The first strict inclusion comes from Bell's inequality, and the last two strict inclusions come from results in  \cite{slofstra2019set,scholz2008tsirelson, dykema2019non, coladangelo2020inherently}.

The winning expectation for the given  quantum strategy is
\begin{equation} \sum_{x,y,a,b}\mu(x,y)\cdot\langle\psi|P_{x,a}\otimes Q_{y,b}|\psi\rangle\cdot\lambda(x,y,a,b).
\end{equation}
If we take all of the quantum strategies, the supremum of winning expectations is
\begin{equation}
\omega_q(\mathcal{G})=\underset{\tiny{\begin{array}{c}\mathcal{H}_A,\mathcal{H}_B,\psi,\\P_{x,a},Q_{y,b}\end{array}}}{\sup}\sum_{x,y,a,b}\mu(x,y)\cdot\langle\psi|P_{x,a}\otimes Q_{y,b}|\psi\rangle\cdot\lambda(x,y,a,b),
\end{equation}
which is called the quantum value of $ \mathcal{G} $. The quantum value can certainly be attained  in $ C_{qa} $, but not necessarily in $ C_q $ or $ C_{qs} $.

Now we give a quantum commuting operator strategy for $ \mathcal{G} $ as follows. Let $ \mathcal{H} $ be a (perhaps infinite-dimensional) Hilbert space, $ |\psi\rangle\in \mathcal{H} $, and for every $ (x,y)\in X\times Y $, Alice and Bob have PVMs $ \{E(1)^{x}_{a},~a\in A\} $ and $ \{E(2)^{y}_{b},~b\in B\} $, respectively. Those two sets of PVMs satisfy the following conditions:
\begin{equation}
E(1)^{x}_{a}E(2)^{y}_{b}=E(2)^{y}_{b}E(1)^{x}_{a},~~~\forall~(x,y,a,b)\in X\times Y\times A\times B.
\end{equation}
When Alice receives an input $x$, she performs
$ \{E(1)^{x}_{a},~a\in A\} $ on $ |\psi\rangle $ and sends the result $ a $ to the verifier;
Similarly, when Bob receives an input $ y $, he performs
$ \{E(2)^{y}_{b},~b\in B\} $ on $ |\psi\rangle $ and sends the result $ b $ to the verifier.

We denote the set of all the quantum commuting operator strategies as $ C_{qc} $. We know that $ C_{qc} $ is closed \cite{fritz2012tsirelson}. Given a quantum commuting operator strategy of $ \mathcal{G} $, the winning expectation is
\begin{equation}
\sum_{x,y,a,b}\mu(x,y)\cdot\langle\psi|E(1)^{x}_{a}\cdot E(2)^{y}_{b}|\psi\rangle\cdot\lambda(x,y,a,b).
\end{equation}
Then the supremum of winning expectation (note that it can certainly be obtained) is
\begin{equation}
\omega_{co}(\mathcal{G})=\underset{\tiny{\begin{array}{c}\mathcal{H},\psi,\\E(1)^{x}_{a},E(2)^{y}_{b}\end{array}}}{\sup}
\sum_{x,y,a,b}\mu(x,y)\cdot\langle\psi|E(1)^{x}_{a}\cdot E(2)^{y}_{b}|\psi\rangle\cdot\lambda(x,y,a,b)
\end{equation}
which is called the quantum commuting operator value of $\mathcal{G}$.

It is easy to see that $ C_{qa}\subseteq C_{qc}$ \cite{fritz2012tsirelson}, so that we have $ \omega_c(\mathcal{G})\leq\omega_q(\mathcal{G})\leq\omega_{co}(\mathcal{G}) $.
If we restrict the Hilbert space $ \mathcal{H} $ to be  finite-dimensional in the commuting operator strategies, then $ \omega_q(\mathcal{G})=\omega_{co}(\mathcal{G}) $ (see \cite{tsirelsonbell, scholz2008tsirelson}).
There exist games $ \mathcal{G} $ for which $ \omega_q(\mathcal{G})<\omega_{co}(\mathcal{G}) $ in the infinite-dimensional case, see \cite{slofstra2019set,fritz2012tsirelson}.
The problem of whether $ C_{qc}=C_{qa} $ is the famous Tsirelson's problem, and it is true if and only if the Connes' embedding conjecture is true \cite{connes1976classification}. Kirchberg  shows that Connes’ conjecture has several equivalent reformulations in operator algebras and Banach space theory \cite{kirchberg1993non}.
In \cite{klep2008connes}, Klep and Schweighofer show that Connes' embedding conjecture on von Neumann algebras is equivalent to the tracial version of the  Positivstellensatz.
In 2020, Ji and his collaborators  proved $ MIP^*=RE $, which implies  that Connes' embedding conjecture  is false \cite{ji2020mip}.  But we still don't know an explicit  counterexample.   See \cite{goldbring2022connes,ozawa2013connes,fritz2012tsirelson} for recent results on the
Connes' embedding problem.  This is the main motivation for us to study quantum nonlocal games.

We say a strategy is perfect if and only if the players can certainly win the game with this strategy. A natural problem is to ask whether there exists a perfect strategy in $ C_c $ (or $ C_q, C_{qs}, C_{qa}, C_{qc} $) for a given game $ \mathcal{G} $.

In \cite{lupini2020perfect}, the authors introduce a new class of nonlocal games called imitation games, and they provide an algebraic characterization of  perfect commuting operator strategies for these games. In this paper, we mainly discuss the mirror game, which is a  special subclass of imitation games.

\begin{defi}[mirror game]
	Let $ \mathcal{G} $ be a nonlocal game with a question set $ X\times Y $, an answer set $ A\times B $ and a scoring function $ \lambda:X\times Y\times A\times B\rightarrow\{0,1\} $. The distribution on $ X\times Y $ is the uniform distribution. We say $ G $ is a mirror game if there exist functions $ \xi:X\rightarrow Y $ and $ \eta:Y\rightarrow X $ such that:
	\begin{eqnarray}
	\lambda(x, \xi(x), a, b) \lambda\left(x, \xi(x), a^{\prime}, b\right)=0, ~\forall x \in X, a\neq a^{\prime} \in A, b \in B,\\
	\lambda(\eta(y), y, a, b) \lambda\left(\eta(y), y, a, b^{\prime}\right)=0, ~\forall y \in Y, a \in A, b \neq b^{\prime} \in B.
	\end{eqnarray}
\end{defi}


\begin{example}\label{example1}
	Let  $ X=Y=A=B=\{0,1\} $ and the scoring function $ \lambda $ be given  as follows:
	
	\begin{center}
		\begin{tabular}{|c|c|c|c|c|}\hline
			\diagbox{$(a,b)$}{$ \lambda $}{$(x,y)$} & (0,0) & (0,1) & (1,0) & (1,1)  \\\hline
			(0,0)                                                     & 1     & 0     & 1     & 0      \\\hline
			(0,1)                                                     & 0     & 0     & 1     & 1      \\\hline
			(1,0)                                                     & 0     & 1     & 0     & 0      \\\hline
			(1,1)                                                     & 1     & 0     & 0     & 1     \\\hline
		\end{tabular}
	\end{center}
	We can  check that $ \mathcal{G}=(X,Y,A,B,\lambda) $ is a mirror game with
	\[\xi:0\mapsto 0,~1\mapsto 0, \, \eta:0\mapsto 0,~1\mapsto 1.\]
\end{example}





We use the  universal game algebra and representation defined in \cite{watts2021noncommutative} to describe the relations between the PVMs in the commuting operator strategy below.

\begin{defi}[universal game algebra]
	Let
\begin{equation}
\mathbf{e}=(e(1)^x_a)_{x\in X,a\in A}\cup(e(2)^y_b)_{y\in Y,b\in B}
 \end{equation}
 and $ \mathbb{C}\langle\mathbf{e}\rangle $ be the noncommutative free algebra generated by the tuple $ \mathbf{e} $. Let $ \mathscr{I} $ be the two-sided ideal generated by the following polynomials:
	\begin{equation}
	\begin{aligned}
	&\quad\left\{e(1)^{x}_{a}e(2)^{y}_{b}-e(2)^{y}_{b}e(1)^{x}_{a}\mid\forall x,y,a,b\right\}\\&\cup\left\{(e(1)^{x}_{a})^2-e(1)^{x}_{a}\mid\forall x,a\right\}\cup\left\{(e(2)^{y}_{b})^2-e(2)^{y}_{b}\mid\forall y,b\right\}
	\\&\cup\left\{e(1)^{x}_{a_1}e(1)^{x}_{a_2}\mid\forall x,~a_1\neq a_2\right\}\cup\left\{e(2)^{y}_{b_1}e(2)^{y}_{b_2}\mid \forall y,~b_1\neq b_2\right\}\\&\cup\left\{\sum_{a\in A}e(1)^{x}_{a}-1\mid \forall x\right\}\cup\left\{\sum_{b\in B}e(2)^{y}_{b}-1\mid \forall y\right\}.
	\end{aligned}
	\end{equation}
	Then we define $ \mathcal{U}=\mathbb{C}\langle\mathbf{e}\rangle/\mathscr{I} $ and equip $ \mathcal{U} $ with the involution induced by
	\begin{equation}
	(e(1)^{x}_{a})^*=e(1)^{x}_{a},~(e(2)^{y}_{b})^*=e(2)^{y}_{b}.
	\end{equation}
	where the $ "*" $ of a complex number is its conjugate. We call $ \mathcal{U} $  the universal game algebra of $ \mathcal{G} $.
\end{defi}

For the universal game algebra  $ \mathcal{U} $,  we can use *-representation to describe a commuting operator strategy.
A *-representation of $ \mathcal{U} $ is a  unital *-homomorphism
\begin{equation}
 \pi:\mathcal{U}\rightarrow\mathcal{B}(\mathcal{H}),
 \end{equation}
where $ \mathcal{B}(\mathcal{H}) $ denotes the set of bounded linear operators on a Hilbert space $ \mathcal{H} $ and $ \pi $ satisfies $ \pi(u^*)=\pi(u)^*, \forall u\in U $.
It is obvious that any commutative PVMs
$ \{E(1)^{x}_{a},~a\in A\} $ and $ \{E(2)^{y}_{b},~b\in B\} $ can be obtained by the unital *-homomorphism
\begin{equation}
 \pi:e(1)^{x}_{a}\mapsto E(1)^{x}_{a},~e(2)^{y}_{b}\mapsto E(2)^{y}_{b},
 \end{equation}
and given an arbitrary unital *-homomorphism, the image of $\mathcal{U}$'s generators is commutative PVMs.
Therefore,  each commuting operator strategy corresponds to a pair $ (\pi,|\psi\rangle) $, where
$\pi:\mathcal{U}\rightarrow\mathcal{B}(\mathcal{H})$
is a *-representation and $ |\psi\rangle\in\mathcal{H} $ is a state  (a unit vector).
We can use the language of representation to rewrite $ \omega_{co}(\mathcal{G}) $ as follows:
\begin{equation}
\omega_{co}(\mathcal{G})=\underset{\pi,\psi}{\sup}\langle\psi|\pi(\Phi_{\mathcal{G}})|\psi\rangle,
\end{equation}
where
\begin{equation}
\Phi_{\mathcal{G}}=\sum_{x,y}\sum_{a,b}\mu(x,y)\lambda(x,y,a,b)e(1)^{x}_{a}e(2)^{y}_{b},
\end{equation}
and the supremum is taken over all *-representations $ \pi $ of $ \mathcal{U} $ into bounded operators on a Hilbert space $ \mathcal{H} $ and state $ |\psi\rangle\in\mathcal{H} $.

Since we assume that  $ \mu $ is a uniform distribution, $ \Phi_{\mathcal{G}} $ can be simplified to
\begin{equation}
\Phi_{\mathcal{G}}=\dfrac{1}{|X|\cdot|Y|}\sum_{x,y}\sum_{a,b}\lambda(x,y,a,b)e(1)^{x}_{a}e(2)^{y}_{b}.
\end{equation}
It's obvious that a game $ \mathcal{G} $ has a perfect commuting operator strategy if and only if
\begin{equation}
\omega_{co}(\mathcal{G})=1.
\end{equation}

We also need the concept  of tracial linear functional and tracial state.
\begin{defi}
	A linear mapping $ \tau:\mathcal{A}\rightarrow\mathbb{C} $ on an algebra $ \mathcal{A} $ is said to be tracial if and only if
	\begin{equation}
	\tau(ab)=\tau(ba),~\forall a,b\in\mathcal{A}.
	\end{equation}
	Given a Hilbert space $ \mathcal{H} $ and an operator algebra  $\mathcal{A}$ acting on $ \mathcal{H} $, a state $ |\psi\rangle\in\mathcal{H} $  is called a tracial state 
	if the linear mapping it induces is tracial, i.e.
	\begin{equation}
	\langle\psi|ab|\psi\rangle=\langle\psi|ba|\psi\rangle,~\forall a,b\in\mathcal{A}.
	\end{equation}
	Especially if $ \mathcal{A} $ is a von Neumann algebra, and there exists such a tracial linear mapping $ \tau $ on $ \mathcal{A} $, we say $ (\mathcal{A},\tau) $ is a tracial von Neumann algebra.
\end{defi}
The definition of determining set is given in \cite{watts2021noncommutative}.
\begin{defi}[determining set]\label{def2.3}
	Let $ \mathcal{G} $ be a nonlocal game; its universal game algebra is $ \mathcal{U} $. A set $ \mathcal{F}\subseteq\mathcal{U} $ is denoted as a determining set of $ \mathcal{G} $ if it satisfies that a pair $ (\pi,|\psi\rangle) $ is a perfect commuting operator strategy if and only if 
	$ \pi(\mathcal{F})|\psi\rangle=\{0\} $.
\end{defi}

According to Theorem 3.5 in \cite{watts2021noncommutative}, given any nonlocal   game, we have a natural determining set:

\begin{prop}\label{thm2.1}
	Let $ \mathcal{G}=(X,Y,A,B,\lambda) $ be a nonlocal game, the set of invalid elements
	\begin{equation}
	\mathcal{N}=\{e(1)^{x}_{a}e(2)^{y}_{b}\mid \lambda(x,y,a,b)=0\}
	\end{equation}
	is a determining set. We call it the invalid determining set.
\end{prop}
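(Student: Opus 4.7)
The plan is to prove both directions of the characterization by a direct computation on the explicit formula for the winning expectation. Setting $P^{x}_{a} := \pi(e(1)^{x}_{a})$ and $Q^{y}_{b} := \pi(e(2)^{y}_{b})$, I first use the PVM completeness relations $\sum_{a} P^{x}_{a} = I$ and $\sum_{b} Q^{y}_{b} = I$ together with $\sum_{x,y}\mu(x,y)=1$ to get the trivial identity $\sum_{x,y,a,b}\mu(x,y)\langle\psi|P^{x}_{a}Q^{y}_{b}|\psi\rangle = 1$. Subtracting the value $\langle\psi|\pi(\Phi_{\mathcal{G}})|\psi\rangle$ that the strategy $(\pi,|\psi\rangle)$ achieves, the ``defect'' from a perfect strategy becomes
$$1 - \langle\psi|\pi(\Phi_{\mathcal{G}})|\psi\rangle \;=\; \sum_{(x,y,a,b):\,\lambda(x,y,a,b)=0}\mu(x,y)\,\langle\psi|P^{x}_{a}Q^{y}_{b}|\psi\rangle.$$

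The key algebraic observation is that each $P^{x}_{a}Q^{y}_{b}$ is itself an orthogonal projection. It is a product of two commuting self-adjoint idempotents---commutativity comes from the relations in $\mathscr{I}$, while self-adjointness and idempotence of each factor come from the remaining PVM relations together with the prescribed involution---hence $P^{x}_{a}Q^{y}_{b}$ is both self-adjoint and idempotent. Consequently $\langle\psi|P^{x}_{a}Q^{y}_{b}|\psi\rangle = \|P^{x}_{a}Q^{y}_{b}|\psi\rangle\|^{2} \geq 0$, so every term in the displayed sum is non-negative.

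The desired equivalence then falls out immediately: a sum of non-negative numbers vanishes iff each summand vanishes, which (using that $\mu$ is a probability distribution, and indeed uniform for the mirror games of interest) is equivalent to $\pi(e(1)^{x}_{a}e(2)^{y}_{b})|\psi\rangle = P^{x}_{a}Q^{y}_{b}|\psi\rangle = 0$ for every tuple $(x,y,a,b)$ with $\lambda(x,y,a,b)=0$, i.e.\ $\pi(\mathcal{N})|\psi\rangle = \{0\}$. Both implications of the determining-set condition are obtained simultaneously from this chain of equivalences.

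There is essentially no genuine obstacle here; the only point requiring care is the verification that the product of two commuting projections in $\mathcal{B}(\mathcal{H})$ is again a projection, which is routine and follows at once from the fact that $\pi$ is a unital $*$-homomorphism. I would also note in passing that for a general distribution one should restrict attention to pairs $(x,y)$ with $\mu(x,y)>0$, but for the uniform $\mu$ used throughout the paper this subtlety is vacuous.
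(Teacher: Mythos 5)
Your proof is correct. The paper does not actually supply its own proof of this proposition; it cites Theorem~3.5 of Watts--Helton--Klep (2021) and moves on, so there is nothing internal to compare against, but your argument is the standard direct one that the cited result rests on. The decomposition
\[
1 - \langle\psi|\pi(\Phi_{\mathcal{G}})|\psi\rangle \;=\; \sum_{\lambda(x,y,a,b)=0}\mu(x,y)\,\langle\psi|P^{x}_{a}Q^{y}_{b}|\psi\rangle
\]
is exactly right, as is the observation that each $P^{x}_{a}Q^{y}_{b}$ is an orthogonal projection (a product of two commuting projections), so that each summand is $\|P^{x}_{a}Q^{y}_{b}|\psi\rangle\|^{2}\ge 0$ and the sum vanishes iff every $P^{x}_{a}Q^{y}_{b}|\psi\rangle=0$, i.e.\ iff $\pi(\mathcal{N})|\psi\rangle=\{0\}$. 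You are also right to flag the $\mu(x,y)>0$ point: as stated for a general nonlocal game, the equivalence only goes through when every question pair has positive probability (otherwise $\pi(\mathcal{N})|\psi\rangle=\{0\}$ is strictly stronger than perfectness); this is harmless here since the paper restricts to uniform distributions from the definition of mirror games onward. One cosmetic remark: in deriving the ``trivial identity'' you should say explicitly that $\sum_{a,b}P^{x}_{a}Q^{y}_{b}=\bigl(\sum_{a}P^{x}_{a}\bigr)\bigl(\sum_{b}Q^{y}_{b}\bigr)=I$, which uses the commutativity of the two PVM families; you use this implicitly but it is worth writing down since it is the same structural fact that makes the product a projection.
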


\begin{cor}
	The left ideal $ \mathcal{L}(\mathcal{N}) $ generated by $ \mathcal{N} $ is also a determining set.
\end{cor}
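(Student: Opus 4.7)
The plan is to unfold both directions of the biconditional in Definition \ref{def2.3} applied to $\mathcal{L}(\mathcal{N})$, using the already-established fact (Proposition \ref{thm2.1}) that $\mathcal{N}$ itself is determining. Since $\mathcal{N} \subseteq \mathcal{L}(\mathcal{N})$, the forward implication is immediate: any pair $(\pi,|\psi\rangle)$ satisfying $\pi(\mathcal{L}(\mathcal{N}))|\psi\rangle = \{0\}$ automatically satisfies $\pi(\mathcal{N})|\psi\rangle = \{0\}$, and so by Proposition \ref{thm2.1} is a perfect commuting operator strategy.

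For the reverse direction, suppose $(\pi,|\psi\rangle)$ is a perfect commuting operator strategy. By Proposition \ref{thm2.1}, $\pi(n)|\psi\rangle = 0$ for every $n \in \mathcal{N}$. A generic element of $\mathcal{L}(\mathcal{N})$ has the form $\ell = \sum_{i} u_i n_i$ with $u_i \in \mathcal{U}$ and $n_i \in \mathcal{N}$. Because $\pi$ is a unital *-homomorphism, hence linear and multiplicative, I would compute
\begin{equation*}
\pi(\ell)|\psi\rangle \;=\; \sum_{i} \pi(u_i)\,\pi(n_i)\,|\psi\rangle \;=\; \sum_{i} \pi(u_i)\cdot 0 \;=\; 0,
\end{equation*}
which yields $\pi(\mathcal{L}(\mathcal{N}))|\psi\rangle = \{0\}$, completing the argument.

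There is no real obstacle here; the corollary is essentially the observation that the property ``$\pi(\,\cdot\,)|\psi\rangle = 0$'' is closed under left multiplication by elements of $\mathcal{U}$ (via $\pi$) and under finite sums, which is exactly the closure property defining a left ideal. The only small caveat worth explicitly noting is that $\mathcal{L}(\mathcal{N})$ must be taken as a left ideal in $\mathcal{U}$ (not as a two-sided ideal or as a right ideal) for this precise argument to work; right multiplication would require knowing $\pi(n_i)\pi(u_i)|\psi\rangle = 0$, which is not guaranteed from $\pi(n_i)|\psi\rangle = 0$ alone. Thus the choice of ``left'' in the statement is compatible with the direction in which the zero vector propagates under $\pi$, and the proof is a one-line verification on generators extended by linearity.
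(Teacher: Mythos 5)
Your argument is correct and is essentially the only reasonable one; the paper states this corollary without proof because it follows immediately from Proposition \ref{thm2.1} exactly as you describe. Both directions are handled properly: the forward implication from the containment $\mathcal{N}\subseteq\mathcal{L}(\mathcal{N})$, and the reverse implication from linearity and multiplicativity of $\pi$ applied to a generic element $\sum_i u_i n_i$ of the left ideal, and your caveat about why ``left'' (rather than ``right'' or ``two-sided'') is the appropriate ideal type is a correct and worthwhile observation.
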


For a mirror game $ \mathcal{G} $, suppose its universal game algebra is $ \mathcal{U} $, and we define:
\begin{eqnarray}
f^{\eta(y)}_{y,b}&=\sum_{a\in A,\lambda(\eta(y),y,a,b)=1}e(1)^{\eta(y)}_{a},\\
g^{\xi(x)}_{x,a}&=\sum_{b\in B,\lambda(x,\xi(x),a,b)=1}e(2)^{\xi(x)}_{b}.
\end{eqnarray}
\begin{defi}
	Let $\mathcal{G}=(X, Y, A, B, \lambda)$ be a nonlocal game. For $x \in X, y \in Y, a \in A$ and $b \in B$, denote
	\begin{eqnarray}
	& E_{x, y}^a=\{b \in B: \lambda(x, y, a, b)=1\}\\
	&E_{x, y}^b=\{a \in A: \lambda(x, y, a, b)=1\}.
	\end{eqnarray}
	We define a mirror game as regular if and only if
	\begin{equation}	
	\cup_{a \in A} E_{x, \xi(x)}^a=B~\text{and}~\cup_{b \in B} E_{\eta(y), y}^b=A, \,  \forall x \in X, y \in Y.\end{equation}
	Remark that this condition appeared in \cite{lupini2020perfect} firstly, but they didn't name it.
\end{defi}
\begin{lem}\label{lem2.1}
	A mirror game $ \mathcal{G} $ is regular if and only if the universal game algebra satisfies:
	\begin{equation}\label{reg}
	\sum_{a\in A}g^{\xi(x)}_{x,a}=1, ~\, \forall x\in X~\text{and}~
	\sum_{b\in B}f^{\eta(y)}_{y,b}=1, ~ \, \forall y\in Y.
	\end{equation}
\end{lem}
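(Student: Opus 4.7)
The plan is to reduce each equation in \eqref{reg} to the PVM completeness relations $\sum_{b\in B} e(2)^{\xi(x)}_b=1$ and $\sum_{a\in A} e(1)^{\eta(y)}_a=1$, both of which already hold in $\mathcal{U}$. The key observation I would exploit is that the mirror game condition asserts that for every fixed $x\in X$ and $b\in B$ there is at most one $a\in A$ with $\lambda(x,\xi(x),a,b)=1$, equivalently $|E^{b}_{x,\xi(x)}|\le 1$. Interchanging the order of summation in $\sum_{a\in A} g^{\xi(x)}_{x,a}$ would then yield
\begin{equation*}
\sum_{a\in A} g^{\xi(x)}_{x,a}=\sum_{a\in A}\sum_{b:\,\lambda(x,\xi(x),a,b)=1} e(2)^{\xi(x)}_b=\sum_{b\in B_x} e(2)^{\xi(x)}_b,\qquad B_x:=\bigcup_{a\in A} E^{a}_{x,\xi(x)},
\end{equation*}
with each $e(2)^{\xi(x)}_b$ appearing at most once on the right; the analogous identity for $\sum_{b\in B} f^{\eta(y)}_{y,b}$ follows from the symmetric mirror game condition.

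For the forward direction, regularity gives $B_x=B$, so the right-hand side above collapses directly to $\sum_{b\in B} e(2)^{\xi(x)}_b=1$, and the symmetric calculation handles $f$. For the converse, I would assume $\sum_{a\in A} g^{\xi(x)}_{x,a}=1$ in $\mathcal{U}$, subtract from the PVM identity to obtain $\sum_{b\in B\setminus B_x} e(2)^{\xi(x)}_b=0$, and argue by contradiction: if $B\setminus B_x$ were nonempty, picking $b'$ in it and right-multiplying by $e(2)^{\xi(x)}_{b'}$ together with the orthogonality and idempotency relations in $\mathscr{I}$ would force $e(2)^{\xi(x)}_{b'}=0$ inside $\mathcal{U}$.

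To rule this out I would exhibit a concrete $*$-representation of $\mathcal{U}$ in which every generator is a nonzero projection, namely $\pi: e(1)^{x}_{a}\mapsto |a\rangle\!\langle a|\otimes I_{|B|}$ and $e(2)^{y}_{b}\mapsto I_{|A|}\otimes |b\rangle\!\langle b|$ on $\mathbb{C}^{|A|}\otimes\mathbb{C}^{|B|}$; a direct check shows this $\pi$ respects every defining relation of $\mathscr{I}$ and sends each generator to a nonzero projector, yielding the required contradiction and forcing $B_x=B$. Running the same argument on $f^{\eta(y)}_{y,b}$ produces $\cup_{b\in B} E^{b}_{\eta(y),y}=A$, which completes the proof of regularity. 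The only step with any real substance is this nondegeneracy of the generators of $\mathcal{U}$ in the converse direction; everything else is bookkeeping with the mirror game conditions and the defining relations of the universal game algebra.
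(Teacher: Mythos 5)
Your proof is correct and is exactly the careful spelling-out of what the paper dismisses with ``By the definition of regularity and the universal game algebra.'' The key step in both directions is the observation that the mirror condition makes each $e(2)^{\xi(x)}_b$ appear at most once in $\sum_{a}g^{\xi(x)}_{x,a}$, so that this sum equals $\sum_{b\in B_x}e(2)^{\xi(x)}_b$, which is $1$ exactly when $B_x=B$; the only genuinely nontrivial point is the nondegeneracy you need for the converse, namely that no generator $e(2)^{\xi(x)}_{b'}$ vanishes in $\mathcal{U}$, and your tensor-product representation $e(1)^x_a\mapsto |a\rangle\!\langle a|\otimes I$, $e(2)^y_b\mapsto I\otimes|b\rangle\!\langle b|$ on $\mathbb{C}^{|A|}\otimes\mathbb{C}^{|B|}$ settles this cleanly (all defining relations of $\mathscr{I}$ are checked, and all generators are sent to nonzero projections). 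This is not a different route from the paper; it is the proof the paper leaves implicit, and you have identified and closed the one gap that actually needs an argument.
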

\begin{proof}
	By the definition of regularity  and the universal game algebra.
\end{proof}

 {Example} \ref{example1} (continued).
{\it
	For the mirror game $ \mathcal{G} $ defined in Example \ref{example1},
%
	we can compute that
	$$
	f^{\eta(0)}_{0,0}=e(1)^{0}_{0},~f^{\eta(0)}_{0,1}=e(1)^{0}_{1},~f^{\eta(1)}_{1,0}=0,~f^{\eta(1)}_{1,1}=e(1)^{0}_{0}+e(1)^{0}_{1}=1.
	$$
It is easy to check that 
 $\sum_{b\in B}f^{\eta(y)}_{y,b}=1, ~ \, \forall y\in Y.$
	 Similarly, we have
	$$
	g^{\xi(0)}_{0,0}=e(2)^{0}_{0},~g^{\xi(0)}_{0,1}=e(2)^{0}_{1},~g^{\xi(1)}_{1,0}=1,~g^{\eta(1)}_{1,1}=0.
	$$
 It is true that    $\sum_{a\in A}g^{\xi(x)}_{x,a}=1, ~\, \forall x\in X.$
	Hence,    $ \mathcal{G} $ is a regular mirror game. 
} 

In the following sections, we'll only consider   regular mirror games.

\section{Main Result}\label{sec3}

Given a universal game algebra $ \mathcal{U} $ of
a  nonlocal game $\mathcal{G}$, a general noncommutative
Nullstellensatz developed by Cimpric, Helton, and their collaborators \cite{cimprivc2013noncommutative,cimprivc2014real} has been adapted to  Theorem 4.1 and 4.3 in \cite{watts2021noncommutative} to show that  $\mathcal{G}$ has a perfect commuting operator strategy if and only if there exists a *-representation $\pi: \mathcal{U} \rightarrow \mathcal{B}(\mathcal{H})$ and a state $|\psi\rangle \in \mathcal{H}$ satisfying
\begin{equation}
\pi(\mathcal{L}(\mathcal{N}))|\psi\rangle=\{0\},
\end{equation}
which is also equivalent to
\begin{equation}
-1 \notin \mathcal{L}(\mathcal{N})+\mathcal{L}(\mathcal{N})^*+\operatorname{SOS}_{\mathcal{U}},
\end{equation}
where
\begin{equation}
\operatorname{SOS}_{\mathcal{U}}=\left\{\sum_{i=1}^{n}u_i^{*}u_i\mid u_i\in \mathcal{U},~n\in\mathbb{N}\right\},
\end{equation}
$\mathcal{L}(\mathcal{N})$ is the left ideal generated by the invalid determining set  $\mathcal{N}$.

For synchronous games, the authors use a smaller algebra $ \mathcal{U}(1) $  which is the subalgebra of $\mathcal{U}$ generated by $ e(1)^{x}_{a} $,
and prove that a synchronous game has a perfect commuting operator strategy if and only if there exists a
*-representation $\pi': \mathcal{U}(1) \rightarrow \mathcal{B}(\mathcal{H})$ and a tracial state $|\psi\rangle \in \mathcal{H}$ satisfying
\begin{equation}
\pi'(\mathcal{J}(\rm{synch} \mathcal{B}(1))) |\psi\rangle=\{0\},
\end{equation}
where
$\mathcal{J}(\rm{synch} \mathcal{B}(1))$ is a two-sided ideal in $\mathcal{U}(1)$,
see  Theorem 8.3 and  8.7 in \cite{watts2021noncommutative}.

In  Theorem \ref{thm3.2}, we  generalize Theorem 8.3 and  8.7 in \cite{watts2021noncommutative} for mirror games and
provide  a characterization of  whether or not a mirror game has perfect commuting operator strategies  using   smaller algebras $\mathcal{U}(1)$ and $ \mathcal{U}(2) $, where    $\mathcal{U}(1) $ is the subalgebra of $ \mathcal{U} $ generated by $ e(1)^{x}_{a} $ only, and $ \mathcal{U}(2) $ is  the subalgebra of $ \mathcal{U} $ generated by $ e(2)^{y}_{b} $ only.

Let  $ \mathcal{J}(\operatorname{mir1}) $ be  the two-sided ideal of $ \mathcal{U}(1) $ generated by
\begin{equation}
\left\{e(1)^{x}_{a}f^{\eta(y)}_{y,b}\mid\lambda(x,y,a,b)=0\right\},
\end{equation}
and $ \mathcal{J}(\operatorname{mir2}) $ be  the two-sided ideal of $ \mathcal{U}(2) $ generated by
\begin{equation}
\left\{e(2)^{y}_{b}g^{\xi(x)}_{x,a}\mid\lambda(x,y,a,b)=0\right\}.
\end{equation}

Example \ref{example1} (continued). {\it{
	Let's continue the computation in Example \ref{example1}.
	The two-sided ideal $ \mathcal{J}(\operatorname{mir1}) $  is generated by the following elements:
	$$
	\{e(1)^{0}_{0}e(1)^{0}_{1},~e(1)^{0}_{1}e(1)^{0}_{0},~0,~e(1)^{0}_{0},~e(1)^{0}_{1},~e(1)^{1}_{1}e(1)^{0}_{0},~e(1)^{1}_{1}e(1)^{0}_{1},0,0\}.
	$$
	
	It is clear that
	$ \mathcal{J}(\operatorname{mir1}) $ is generated by
	$\{e(1)^{0}_{0},~e(1)^{0}_{1}\} ~{\text{in}}~  \mathcal{U}(1).$}}

\begin{thm}[main result]\label{thm3.2}
	A regular mirror game with its  universal game algebra $ \mathcal{U} $ and invalid determining set $ \mathcal{N} $ has a perfect commuting operator strategy  if and only if any of the equivalent conditions are satisfied:
	\begin{enumerate}[(1)]
		\item There exists a *-representation $\pi: \mathcal{U} \rightarrow \mathcal{B}(\mathcal{H})$ and a state $|\psi\rangle \in \mathcal{H}$ satisfying
		\begin{equation}
		\pi(\mathcal{L}(\mathcal{N}))|\psi\rangle=\{0\};
		\end{equation}
		\item There exists a *-representation $\pi^{\prime}:\mathcal{U}(1) \rightarrow \mathcal{B}(\mathcal{H})$ and a tracial state $|\psi\rangle \in \mathcal{H}$ satisfying
		\begin{equation}
		\pi^{\prime}(\mathcal{J}(\operatorname{mir1}))|\psi\rangle=\{0\} ;
		\end{equation}
		\item There exists a *-representation $\pi^{\prime\prime}:\mathcal{U}(2) \rightarrow \mathcal{B}(\mathcal{H})$ and a tracial state $|\phi\rangle \in \mathcal{H}$ satisfying
		\begin{equation}
		\pi^{\prime\prime}(\mathcal{J}(\operatorname{mir2}))|\phi\rangle=\{0\};
		\end{equation}
		\item
		There exists a *-representation $\pi_{0}^{\prime}$ of $\mathcal{U}(1)$ mapping into a tracial von Neumann algebra $\mathcal{W} \subseteq \mathcal{B}(\mathcal{H})$ satisfying
		\begin{equation}
		\pi_{0}^{\prime}(\mathcal{J}(\operatorname{mir1}))=\{0\};
		\end{equation}
		\item
		There exists a *-representation $\pi_{0}^{\prime\prime}$ of $\mathcal{U}(2)$ mapping into a tracial von Neumann algebra $\mathcal{W}^{\prime} \subseteq \mathcal{B}(\mathcal{H})$ satisfying
		\begin{equation}
		\pi_{0}^{\prime\prime}(\mathcal{J}(\operatorname{mir2}))=\{0\}.
		\end{equation}
	\end{enumerate}
\end{thm}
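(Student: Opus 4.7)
The plan is first to observe that the equivalence (perfect commuting operator strategy) $\Leftrightarrow$ (1) is immediate from Proposition \ref{thm2.1} and its Corollary, which state that $\mathcal{L}(\mathcal{N})$ is a determining set. It therefore suffices to prove that (1)--(5) are equivalent. I would establish the cycle $(1)\Rightarrow(2)\Rightarrow(4)\Rightarrow(1)$; the parallel cycle $(1)\Rightarrow(3)\Rightarrow(5)\Rightarrow(1)$ follows by interchanging the roles of Alice and Bob, swapping $g^{\xi(x)}_{x,a}\leftrightarrow f^{\eta(y)}_{y,b}$ together with $\xi\leftrightarrow\eta$, so the proof reduces to one cycle.

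The technical heart of the argument is the pair of identities
\begin{equation}
\pi(e(1)^{x}_{a})|\psi\rangle=\pi(g^{\xi(x)}_{x,a})|\psi\rangle,\qquad \pi(e(2)^{y}_{b})|\psi\rangle=\pi(f^{\eta(y)}_{y,b})|\psi\rangle,
\end{equation}
valid whenever $\pi(\mathcal{L}(\mathcal{N}))|\psi\rangle=\{0\}$. To prove the first, I would insert Bob's completeness $\sum_{b}\pi(e(2)^{\xi(x)}_{b})=1$ to the right of $\pi(e(1)^{x}_{a})$, drop the summands with $\lambda(x,\xi(x),a,b)=0$ by hypothesis, and match the remainder to $\pi(e(1)^{x}_{a})\pi(g^{\xi(x)}_{x,a})|\psi\rangle$; then expand $\pi(g^{\xi(x)}_{x,a})|\psi\rangle$ by inserting Alice's completeness $\sum_{a'}\pi(e(1)^{x}_{a'})=1$, commute the Bob-side factor through (using Alice-Bob commutativity), and invoke the mirror condition to force $\lambda(x,\xi(x),a',b)=0$ for $a'\neq a$ whenever $\lambda(x,\xi(x),a,b)=1$, collapsing the sum back to $a'=a$. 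Regularity (Lemma \ref{lem2.1}) guarantees that $\{g^{\xi(x)}_{x,a}\}_{a}$ is a PVM in $\pi(\mathcal{U}(2))$; the second identity follows symmetrically.

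For $(1)\Rightarrow(2)$ set $\pi'=\pi|_{\mathcal{U}(1)}$ and $\tau(u)=\langle\psi|\pi'(u)|\psi\rangle$. Traciality on monomials $u_{1}\cdots u_{n}$ in Alice's generators follows by using the first identity to replace the last factor $\pi(u_{n})|\psi\rangle$ by an element of $\pi(\mathcal{U}(2))$, commuting this element through the rest of the word, and converting it back on the left via the conjugate of the identity, which effects a cyclic shift by one generator --- a calculation identical in structure to the synchronous case in \cite{watts2021noncommutative}. For the vanishing $\pi'(\mathcal{J}(\operatorname{mir1}))|\psi\rangle=\{0\}$, it suffices to show $\pi(e(1)^{x}_{a}f^{\eta(y)}_{y,b}w)|\psi\rangle=0$ for any generator of the ideal with $\lambda(x,y,a,b)=0$ and $w\in\mathcal{U}(1)$: take the squared norm, apply traciality to cyclically move $f^{\eta(y)}_{y,b}$ adjacent to $|\psi\rangle$, use the second identity to replace it by $\pi(e(2)^{y}_{b})$, and conclude from the hypothesis $\pi(e(1)^{x}_{a}e(2)^{y}_{b})|\psi\rangle=0$.

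For $(2)\Rightarrow(4)$ apply the standard GNS construction: the weak-operator closure of $\pi'(\mathcal{U}(1))$ on the cyclic subspace of $|\psi\rangle$ is a tracial von Neumann algebra $\mathcal{W}$, the extended trace is faithful, and $\tau(u^{*}u)=0$ for $u\in\mathcal{J}(\operatorname{mir1})$ then forces $u=0$ in $\mathcal{W}$. For $(4)\Rightarrow(1)$ I use the standard coupling on $L^{2}(\mathcal{W},\tau)$ that represents $e(1)^{x}_{a}$ by left multiplication by $\pi_{0}^{\prime}(e(1)^{x}_{a})$ and $e(2)^{y}_{b}$ by right multiplication by $\pi_{0}^{\prime}(f^{\eta(y)}_{y,b})$, with the cyclic vector $\hat{1}$ as state: regularity makes the right-multiplications into a valid Bob PVM, they commute automatically with the left-multiplications, and the hypothesis $\pi_{0}^{\prime}(\mathcal{J}(\operatorname{mir1}))=\{0\}$ yields $\pi(e(1)^{x}_{a}e(2)^{y}_{b})\hat{1}=\pi_{0}^{\prime}(e(1)^{x}_{a}f^{\eta(y)}_{y,b})=0$ whenever $\lambda(x,y,a,b)=0$, which closes the cycle. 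The main obstacle I expect is proving the two identities cleanly and using them to drive both traciality and the ideal-vanishing calculation; the GNS and coupling arguments parallel the synchronous case \cite{watts2021noncommutative,helton2019algebras} once $f^{\eta(y)}_{y,b}$ is identified as the correct Alice-side surrogate for $e(2)^{y}_{b}$.
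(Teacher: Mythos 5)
Your proposal is correct, but it takes a genuinely different route than the paper in the direction from the tracial conditions back to a perfect strategy. The paper proves the two biconditionals $(1)\Leftrightarrow(2)$ and $(2)\Leftrightarrow(4)$; its $(2)\Rightarrow(1)$ step extends the positive tracial functional $\ell'$ on $\mathcal{U}(1)$ to a functional $\ell$ on all of $\mathcal{U}$ by sending $w(e(1))u(e(2))\mapsto\ell'(w(e(1))u^*(f))$, checks well-definedness via regularity, verifies $\ell(\operatorname{SOS}_{\mathcal{U}})\ge 0$ and $\ell(\mathcal{L}(\mathcal{N}))=\{0\}$, and then invokes the noncommutative Nullstellensatz of Cimpri\v{c}--Helton--Klep--McCullough--Nelson (Theorem~4.3 of \cite{watts2021noncommutative}) to deduce that $-1\notin\operatorname{SOS}_{\mathcal{U}}+\mathcal{L}(\mathcal{N})+\mathcal{L}(\mathcal{N})^*$, which gives (1). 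You instead close the cycle with a constructive $(4)\Rightarrow(1)$: build the GNS space $L^2(\mathcal{W},\tau)$, represent Alice by left multiplication and Bob by right multiplication by $\pi_0'(f^{\eta(y)}_{y,b})$, and check directly that the cyclic vector annihilates $\mathcal{L}(\mathcal{N})$. This is a valid and arguably more elementary argument; it entirely sidesteps the Nullstellensatz and yields an explicit perfect commuting strategy, whereas the paper's formulation is the one that directly feeds Corollary~\ref{cor3.3} and the SOS/Gr\"obner certificate algorithm of Section~\ref{sec4}. Your pair of identities $\pi(e(1)^x_a)|\psi\rangle=\pi(g^{\xi(x)}_{x,a})|\psi\rangle$ and $\pi(e(2)^y_b)|\psi\rangle=\pi(f^{\eta(y)}_{y,b})|\psi\rangle$ are precisely the evaluated form of the paper's Lemma~\ref{lem3.1}, and your traciality argument is the content of Lemma~\ref{lem3.4}; your ideal-vanishing step via a squared-norm and cyclic shuffle is a slight detour compared to the paper's cleaner Lemma~\ref{lem3.3} (which shows $\mathcal{J}(\operatorname{mir1})\subseteq\mathcal{L}(\mathcal{N})$ outright), but it is correct. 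Finally, in $(2)\Rightarrow(4)$ you take $\mathcal{W}$ to be the weak-operator closure of $\pi'(\mathcal{U}(1))$ on the cyclic subspace of $|\psi\rangle$, which is the careful choice; the paper instead sets $\mathcal{W}=\mathcal{B}(\check{\mathcal{H}})$ and asserts $|\psi\rangle$ is tracial on all of it, which is a small overreach since the hypothesis only gives traciality on the image of $\mathcal{U}(1)$ --- your version is the one that actually works in the infinite-dimensional case.
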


To prove our main theorem, we introduce several  lemmas.
\begin{lem}\label{lem3.1}
	For every $ x\in X $ and $ a\in A $, we have $ e(1)^{x}_{a}-g^{\xi(x)}_{x,a}\in\mathcal{L}(\mathcal{N}) $. Similarly, for every $ y\in Y $ and $ b\in B $, we have $ e(2)^{y}_{b}-f^{\eta(y)}_{y,b}\in\mathcal{L}(\mathcal{N}) $.
\end{lem}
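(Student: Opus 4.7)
The plan is to exploit two facts simultaneously: the PVM completeness relations in $\mathcal{U}$ (namely $\sum_{b\in B}e(2)^{\xi(x)}_b=1$ and $\sum_{a'\in A}e(1)^x_{a'}=1$), and the mirror condition $\lambda(x,\xi(x),a,b)\,\lambda(x,\xi(x),a',b)=0$ for $a\ne a'$. The first lets us reduce both $e(1)^x_a$ and $g^{\xi(x)}_{x,a}$ to sums of ``cross'' products of $e(1)$-generators with $e(2)^{\xi(x)}_b$-generators modulo $\mathcal{L}(\mathcal{N})$, and the second lets us identify those two representatives.

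Concretely, I would first multiply $e(1)^x_a$ on the right by the resolution of identity $\sum_b e(2)^{\xi(x)}_b = 1$ and split the resulting sum according to whether $\lambda(x,\xi(x),a,b)$ is $1$ or $0$. The $\lambda=0$ terms lie in $\mathcal{L}(\mathcal{N})$ by definition of $\mathcal{N}$, so
\begin{equation}
e(1)^x_a \equiv \sum_{b:\lambda(x,\xi(x),a,b)=1} e(1)^x_a\, e(2)^{\xi(x)}_b \pmod{\mathcal{L}(\mathcal{N})}.
\end{equation}
Next, I would start from the definition $g^{\xi(x)}_{x,a}=\sum_{b:\lambda(x,\xi(x),a,b)=1} e(2)^{\xi(x)}_b$ and insert the identity $1=\sum_{a'\in A} e(1)^x_{a'}$ on the left of each term. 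The term with $a'=a$ matches the representative obtained above. The remaining terms with $a'\ne a$ are exactly where the mirror property enters: whenever $\lambda(x,\xi(x),a,b)=1$ and $a'\ne a$, the condition forces $\lambda(x,\xi(x),a',b)=0$, so each product $e(1)^x_{a'}e(2)^{\xi(x)}_b$ lies in $\mathcal{N}\subseteq\mathcal{L}(\mathcal{N})$. This yields $g^{\xi(x)}_{x,a}\equiv \sum_{b:\lambda(x,\xi(x),a,b)=1} e(1)^x_a e(2)^{\xi(x)}_b\pmod{\mathcal{L}(\mathcal{N})}$, and combining the two congruences gives $e(1)^x_a - g^{\xi(x)}_{x,a}\in\mathcal{L}(\mathcal{N})$.

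The second statement, $e(2)^y_b - f^{\eta(y)}_{y,b}\in\mathcal{L}(\mathcal{N})$, is perfectly symmetric: one uses $\sum_{a} e(1)^{\eta(y)}_a = 1$ to rewrite $e(2)^y_b$ modulo $\mathcal{L}(\mathcal{N})$, and the second mirror condition $\lambda(\eta(y),y,a,b)\lambda(\eta(y),y,a,b')=0$ for $b\ne b'$ to identify the result with $f^{\eta(y)}_{y,b}$. I do not anticipate a real obstacle here; the only mild subtlety is keeping track of the asymmetry that $\mathcal{L}(\mathcal{N})$ is only a \emph{left} ideal, which is why the PVM identity must be inserted on the right in the first computation and on the left in the symmetric one. (Commuting $e(1)$- and $e(2)$-generators is free via the commutation relations already encoded in $\mathscr{I}$, so positioning of factors costs nothing modulo $\mathcal{L}(\mathcal{N})$.)
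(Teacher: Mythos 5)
Your proof is correct and takes essentially the same approach as the paper: both use the PVM completeness relations $\sum_{a'\in A}e(1)^{x}_{a'}=1$ and $\sum_{b\in B}e(2)^{\xi(x)}_{b}=1$ together with the mirror condition $\lambda(x,\xi(x),a,b)\lambda(x,\xi(x),a',b)=0$ to show that $e(1)^{x}_{a}$ and $g^{\xi(x)}_{x,a}$ both differ from $\sum_{b:\lambda(x,\xi(x),a,b)=1}e(1)^{x}_{a}e(2)^{\xi(x)}_{b}$ by elements of $\mathcal{L}(\mathcal{N})$. The paper merely carries out the same computation as a single chain of equalities starting from $e(1)^{x}_{a}-g^{\xi(x)}_{x,a}$, whereas you split it into two congruences meeting at a common representative; the underlying algebraic identities and uses of $\mathcal{N}$ are identical.
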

\begin{proof}
	Firstly, by $\sum_{a^{\prime}}e(1)^{x}_{a^{\prime}}=1$,  we have
	\begin{equation*}
	\begin{aligned}
	&\quad e(1)^{x}_{a}-g^{\xi(x)}_{x,a}&\\&=e(1)^{x}_{a}-\left(\sum_{a^{\prime}\in A}e(1)^{x}_{a^{\prime}}
	\right)g^{\xi(x)}_{x,a}
	=e(1)^{x}_{a}-e(1)^{x}_{a}\cdot g^{\xi(x)}_{x,a}&\\
	&\quad+\left(\sum_{a^{\prime}\neq a}e(1)^{x}_{a^{\prime}}\right)\cdot\left(\sum_{b\in B,\lambda(x,\xi(x),a,b)=1}e(2)^{\xi(x)}_{b}\right)&\\
	&=e(1)^{x}_{a}\cdot\left(1-\sum_{b\in B,\lambda(x,\xi(x),a,b)=1}e(2)^{\xi(x)}_{b}\right)&\\
	&\quad +\sum_{a^{\prime}\neq a}
	\sum_{b\in B,\lambda(x,\xi(x),a,b)=1}
	e(1)^{x}_{a^{\prime}}e(2)^{\xi(x)}_{b}.&
	\end{aligned}
	\end{equation*}
	Notice that
	$$
	\begin{aligned}
	&\quad 1-\sum_{b\in B,\lambda(x,\xi(x),a,b)=1}e(2)^{\xi(x)}_{b}
	=\sum_{b\in B,\lambda(x,\xi(x),a,b)=0}e(2)^{\xi(x)}_{b}.
	\end{aligned}
	$$
	By the definition of $\mathcal{L}(\mathcal{N})$, we have
	\begin{eqnarray}
	&\qquad e(1)^{x}_{a}\left(1-\sum_{b\in B,\lambda(x,\xi(x),a,b)=1}e(2)^{\xi(x)}_{b}\right)\nonumber\\
&=\sum_{b\in B,\lambda(x,\xi(x),a,b)=0}e(1)^{x}_{a}e(2)^{\xi(x)}_{b}\in\mathcal{L}(\mathcal{N}).
	\end{eqnarray}
	On the other hand, it is known by the definition of mirror games that
	$ \lambda(x,\xi(x),a^{\prime},b)=0 $
	when $ a^{\prime}\neq a $ and $ \lambda(x,\xi(x),a,b)=1 $.  Hence we have
	$
	e(1)^{x}_{a^{\prime}}e(2)^{\xi(x)}_{b}\in\mathcal{N}
	$
	which implies
	\begin{equation}
	\sum_{a^{\prime}\neq a}
	\sum_{b\in B,\lambda(x,\xi(x),a,b)=1}
	e(1)^{x}_{a^{\prime}}e(2)^{\xi(x)}_{b}\in\mathcal{L}(\mathcal{N}).
	\end{equation}
	Therefore, we have
	\begin{equation}
	e(1)^{x}_{a}-g^{\xi(x)}_{x,a}\in\mathcal{L}(\mathcal{N}).
	\end{equation}
	
	Similarly, $ e(2)^{y}_{b}-f^{\eta(y)}_{y,b} $ can be rewritten as follows:
	\begin{align*}
	\quad e(2)^{y}_{b}-f^{\eta(y)}_{y,b}
	&=e(2)^{y}_{b}-f^{\eta(y)}_{y,b}\left(\sum_{b^{\prime}\in B}e(2)^{y}_{b^{\prime}}\right )\\
	&=e(2)^{y}_{b}-f^{\eta(y)}_{y,b}e(2)^{y}_{b}+\sum_{b^{\prime}\neq b}f^{\eta(y)}_{y,b}e(2)^{y}_{b^{\prime}}\\
	&=\left(1-\sum_{a\in A,\lambda(\eta(y),y,a,b)=1}e(1)^{\eta(y)}_{a}\right)
	\cdot e(2)^{y}_{b}\\
	&+\sum_{b^{\prime}\neq b}
	\sum_{a\in A,\lambda(\eta(y),y,a,b)=1}
	e(1)^{\eta(y)}_{a}e(2)^{y}_{b^{\prime}}.
	\end{align*}
	We still have
	\begin{eqnarray}
	&\quad\left(1-\sum_{a\in A,\lambda(\eta(y),y,a,b)=1}e(1)^{\eta(y)}_{a}\right)
	\cdot e(2)^{y}_{b}\nonumber\\
	&=\sum_{a\in A,\lambda(\eta(y),y,a,b)=0}e(1)^{\eta(y)}_{a}e(2)^{y}_{b}\in\mathcal{L}(\mathcal{N}),
	\end{eqnarray}
	and {by the definition of the mirror game}, we have
	\begin{equation}
	\sum_{b^{\prime}\neq b}
	\sum_{a\in A,\lambda(\eta(y),y,a,b)=1}
	e(1)^{\eta(y)}_{a}e(2)^{y}_{b^{\prime}}\in\mathcal{L}(\mathcal{N}).
	\end{equation}
	Therefore, we have
	\begin{equation}
	e(2)^{y}_{b}-f^{\eta(y)}_{y,b}\in\mathcal{L}(\mathcal{N}).
	\end{equation}
\end{proof}

\begin{lem}\label{lem3.2}
	We have the following inclusion relations:
	\begin{eqnarray}
	\{e(1)^{x}_{a}f^{\eta(y)}_{y,b}\mid\lambda(x,y,a,b)=0\}\subseteq\mathcal{L}(\mathcal{N}); \\
	\{e(2)^{y}_{b}g^{\xi(x)}_{x,a}\mid\lambda(x,y,a,b)=0\}\subseteq\mathcal{L}(\mathcal{N}).
	\end{eqnarray}
\end{lem}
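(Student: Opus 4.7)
The plan is to reduce both inclusions to Lemma~\ref{lem3.1} by combining two facts: the commutativity relation $e(1)^x_a\,e(2)^y_b = e(2)^y_b\,e(1)^x_a$ that is built into $\mathcal{U}$, and the closure of $\mathcal{L}(\mathcal{N})$ under left multiplication by arbitrary elements of $\mathcal{U}$ (since it is a \emph{left} ideal). In effect, Lemma~\ref{lem3.1} lets me replace $f^{\eta(y)}_{y,b}$ modulo $\mathcal{L}(\mathcal{N})$ by the commuting PVM element $e(2)^y_b$, and analogously $g^{\xi(x)}_{x,a}$ by $e(1)^x_a$; once this replacement is made, the condition $\lambda(x,y,a,b)=0$ places the remaining product directly into the invalid determining set $\mathcal{N}$.

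For the first inclusion I would fix $(x,y,a,b)$ with $\lambda(x,y,a,b)=0$ and write
$$e(1)^x_a\,f^{\eta(y)}_{y,b} \;=\; e(1)^x_a\,e(2)^y_b \;+\; e(1)^x_a\bigl(f^{\eta(y)}_{y,b}-e(2)^y_b\bigr).$$
The first summand lies in $\mathcal{N}\subseteq\mathcal{L}(\mathcal{N})$ by hypothesis on $\lambda$. For the second, Lemma~\ref{lem3.1} says $f^{\eta(y)}_{y,b}-e(2)^y_b\in\mathcal{L}(\mathcal{N})$, and since $\mathcal{L}(\mathcal{N})$ is a left ideal, left-multiplying by $e(1)^x_a$ keeps the result inside it.

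For the second inclusion I would use the symmetric decomposition, now exploiting commutativity of $e(1)$-generators with $e(2)$-generators:
$$e(2)^y_b\,g^{\xi(x)}_{x,a} \;=\; e(2)^y_b\,e(1)^x_a \;-\; e(2)^y_b\bigl(e(1)^x_a-g^{\xi(x)}_{x,a}\bigr) \;=\; e(1)^x_a\,e(2)^y_b \;-\; e(2)^y_b\bigl(e(1)^x_a-g^{\xi(x)}_{x,a}\bigr).$$
The first summand is in $\mathcal{N}\subseteq\mathcal{L}(\mathcal{N})$ by $\lambda(x,y,a,b)=0$, and the second summand is in $\mathcal{L}(\mathcal{N})$ by Lemma~\ref{lem3.1} together with left-ideal closure under left multiplication by $e(2)^y_b$.

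I do not anticipate any genuine obstacle. The one point that deserves explicit attention is the \emph{ordering} of the factors: the PVM generator $e(1)^x_a$ (resp.\ $e(2)^y_b$) must sit on the left so that left-ideal closure can absorb the correction term $f^{\eta(y)}_{y,b}-e(2)^y_b$ (resp.\ $e(1)^x_a-g^{\xi(x)}_{x,a}$). This is also precisely the order in which these generators will appear as generators of the two-sided ideals $\mathcal{J}(\operatorname{mir1})$ and $\mathcal{J}(\operatorname{mir2})$ used in Theorem~\ref{thm3.2}, which makes the lemma exactly the bridge needed to pass from the invalid determining set $\mathcal{N}\subseteq\mathcal{U}$ to the ideals living inside the smaller algebras $\mathcal{U}(1)$ and $\mathcal{U}(2)$.
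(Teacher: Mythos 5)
Your proof is correct and takes essentially the same route as the paper: the same two decompositions, the same appeal to Lemma~\ref{lem3.1} for the correction terms, the same use of commutativity of $e(1)^x_a$ with $e(2)^y_b$, and the same use of left-ideal closure to absorb the correction. The only cosmetic difference is a sign convention in how the correction term is written in the first display.
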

\begin{proof}
	Notice that
	\begin{align*}
	e(1)^{x}_{a}f^{\eta(y)}_{y,b}=e(1)^{x}_{a}e(2)^{y}_{b}-e(1)^{x}_{a}\left(e(2)^{y}_{b}-f^{\eta(y)}_{y,b}\right)
	\end{align*}
	As $ \lambda(x,y,a,b)=0 $, we know $ e(1)^{x}_{a}e(2)^{y}_{b}\in\mathcal{N}\subseteq\mathcal{L}(\mathcal{N}) $. By Lemma \ref{lem3.1},  we have $ e(2)^{y}_{b}-f^{\eta(y)}_{y,b}\in\mathcal{L}(\mathcal{N}) $. Therefore, we have    \begin{equation}
	e(1)^{x}_{a}f^{\eta(y)}_{y,b}\in\mathcal{L}(\mathcal{N}).
	\end{equation}
	
	For $ e(2)^{y}_{b}g^{\xi(x)}_{x,a} $, we have
	$$
	\begin{aligned}
	e(2)^{y}_{b}g^{\xi(x)}_{x,a}&=e(2)^{y}_{b}e(1)^{x}_{a}-e(2)^{y}_{b}\left(e(1)^{x}_{a}-g^{\xi(x)}_{x,a}\right)\\
	&=e(1)^{x}_{a}e(2)^{y}_{b}-e(2)^{y}_{b}\left(e(1)^{x}_{a}-g^{\xi(x)}_{x,a}\right)\\
	&(\text{as $ e(1)^{x}_{a}$ always commutes with $ e(2)^{y}_{b} $})
	\end{aligned}
	$$
	We still have $ e(1)^{x}_{a}e(2)^{y}_{b}\in\mathcal{N}\subseteq\mathcal{L}(\mathcal{N}) $ by $ \lambda(x,y,a,b)=0 $,
	and $ e(1)^{x}_{a}-g^{\xi(x)}_{x,a}\in\mathcal{L}(\mathcal{N}) $ by Lemma \ref{lem3.1}.
	Then we have
	\begin{equation}
	e(2)^{y}_{b}g^{\xi(x)}_{x,a}\in\mathcal{L}(\mathcal{N}).
	\end{equation}
\end{proof}

\begin{lem}\label{lem3.3} We have
	$ \mathcal{J}(\operatorname{mir1})\subseteq\mathcal{L}(\mathcal{N}) $ and $ \mathcal{J}(\operatorname{mir2})\subseteq\mathcal{L}(\mathcal{N}) $.
\end{lem}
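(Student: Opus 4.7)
The plan is to prove the stronger statement that $\mathcal{U}(1) \cap \mathcal{L}(\mathcal{N})$ is a two-sided ideal of $\mathcal{U}(1)$. Since Lemma \ref{lem3.2} shows that every generator $e(1)^x_a f^{\eta(y)}_{y,b}$ of $\mathcal{J}(\operatorname{mir1})$ (with $\lambda(x,y,a,b)=0$) lies in $\mathcal{L}(\mathcal{N})$ and visibly in $\mathcal{U}(1)$, the desired containment $\mathcal{J}(\operatorname{mir1}) \subseteq \mathcal{L}(\mathcal{N})$ will follow at once from this stronger claim.

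Left $\mathcal{U}(1)$-multiplication trivially preserves $\mathcal{U}(1) \cap \mathcal{L}(\mathcal{N})$, since $\mathcal{L}(\mathcal{N})$ is a left $\mathcal{U}$-ideal and $\mathcal{U}(1) \subseteq \mathcal{U}$. The substantive work is in handling right multiplication. For any $w \in \mathcal{U}(1) \cap \mathcal{L}(\mathcal{N})$ and any generator $e(1)^{x'}_{a'}$ of $\mathcal{U}(1)$, I would apply Lemma \ref{lem3.1} to decompose
\[
e(1)^{x'}_{a'} = g^{\xi(x')}_{x',a'} + h, \qquad g^{\xi(x')}_{x',a'} \in \mathcal{U}(2), \quad h \in \mathcal{L}(\mathcal{N}).
\]
Then $w \cdot e(1)^{x'}_{a'} = w \cdot g^{\xi(x')}_{x',a'} + w \cdot h$. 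The term $w \cdot h$ lies in $\mathcal{U} \cdot \mathcal{L}(\mathcal{N}) \subseteq \mathcal{L}(\mathcal{N})$ by the left-ideal property. For the first term, the key observation is that $w \in \mathcal{U}(1)$ commutes with $g^{\xi(x')}_{x',a'} \in \mathcal{U}(2)$ under the defining relations of the universal game algebra, so
\[
w \cdot g^{\xi(x')}_{x',a'} = g^{\xi(x')}_{x',a'} \cdot w \in \mathcal{U} \cdot \mathcal{L}(\mathcal{N}) \subseteq \mathcal{L}(\mathcal{N}).
\]
Hence $w \cdot e(1)^{x'}_{a'}$ again lies in $\mathcal{U}(1) \cap \mathcal{L}(\mathcal{N})$.

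Iterating on the length of a word in the generators of $\mathcal{U}(1)$, this extends to $w \cdot q \in \mathcal{U}(1) \cap \mathcal{L}(\mathcal{N})$ for every $q \in \mathcal{U}(1)$, establishing that $\mathcal{U}(1) \cap \mathcal{L}(\mathcal{N})$ is a two-sided ideal of $\mathcal{U}(1)$ and hence contains $\mathcal{J}(\operatorname{mir1})$. The argument for $\mathcal{J}(\operatorname{mir2}) \subseteq \mathcal{L}(\mathcal{N})$ is symmetric, swapping the roles of $\mathcal{U}(1)$ and $\mathcal{U}(2)$ (and of $g$ and $f$) throughout, using the other half of Lemmas \ref{lem3.1} and \ref{lem3.2}. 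There is no real obstacle here: the only nontrivial input beyond those two lemmas is the commutativity between $\mathcal{U}(1)$ and $\mathcal{U}(2)$, which is built into the defining relations of the universal game algebra.
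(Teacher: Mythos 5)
Your proof is correct and uses the same key ingredients as the paper: Lemma~\ref{lem3.1} to write $e(1)^{x'}_{a'} = g^{\xi(x')}_{x',a'} + h$ with $h \in \mathcal{L}(\mathcal{N})$, commutativity of $\mathcal{U}(1)$ and $\mathcal{U}(2)$ to move the $g$ factor to the left, and the left-ideal property of $\mathcal{L}(\mathcal{N})$. The only difference is organizational: you package the induction on word length as the statement that $\mathcal{U}(1)\cap\mathcal{L}(\mathcal{N})$ is a two-sided ideal of $\mathcal{U}(1)$, whereas the paper performs the corresponding telescoping computation directly on a typical element of $\mathcal{J}(\operatorname{mir1})$.
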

\begin{proof}
	Firstly let us  consider a monomial
\[ w(e(1))=e(1)^{x_1}_{a_1}\cdots e(1)^{x_t}_{a_t}\in \mathcal{U}(1),\] we have:
	\begin{align*}
	&\quad e(1)^{x_1}_{a_1}\cdots e(1)^{x_t}_{a_t}-g^{\xi(x_t)}_{x_t,a_t}e(1)^{x_1}_{a_1}\cdots e(1)^{x_{t-1}}_{a_{t-1}}\\
	&=
	e(1)^{x_1}_{a_1}\cdots e(1)^{x_t}_{a_t}-e(1)^{x_1}_{a_1}\cdots e(1)^{x_{t-1}}_{a_{t-1}}g^{\xi(x_t)}_{x_t,a_t}\\
	&=e(1)^{x_1}_{a_1}\cdots e(1)^{x_{t-1}}_{a_{t-1}}\left(e(1)^{x_t}_{a_t}-g^{\xi(x_t)}_{x_t,a_t}\right)\in\mathcal{L}(\mathcal{N}).
	\end{align*}
	Then we have
	\begin{equation}\label{eq1}
	\begin{aligned}
	&\qquad e(1)^{x_1}_{a_1}\cdots e(1)^{x_{t-1}}_{a_{t-1}} e(1)^{x_t}_{a_t}-g^{\xi(x_t)}_{x_t,a_t}g^{\xi(x_{t-1})}_{x_{t-1},a_{t-1}}\cdots g^{\xi(x_1)}_{x_1,a_1}\\
	&=e(1)^{x_1}_{a_1}\cdots e(1)^{x_t}_{a_t}-g^{\xi(x_t)}_{x_t,a_t}e(1)^{x_1}_{a_1}\cdots e(1)^{x_{t-1}}_{a_{t-1}}\\
	&+g^{\xi(x_t)}_{x_t,a_t}\left(e(1)^{x_1}_{a_1}\cdots e(1)^{x_{t-1}}_{a_{t-1}}
	-g^{\xi(x_{t-1})}_{x_{t-1},a_{t-1}}e(1)^{x_1}_{a_1}\cdots e(1)^{x_{t-2}}_{a_{t-2}}\right)\\
	&+\cdots+g^{\xi(x_t)}_{x_t,a_t}\cdots g^{\xi(x_2)}_{x_2,a_2}\left(e(1)^{x_1}_{a_1}-g^{\xi(x_1)}_{x_1,a_1}\right)
	\in\mathcal{L}(\mathcal{N}).
	\end{aligned}
	\end{equation}
	It's known  that $ g^{\xi(x_t)}_{x_t,a_t}\cdots g^{\xi(x_1)}_{x_1,a_1}=w^{*}(g) $.
	Then equation (\ref{eq1})
	can be written as
	\begin{equation}\label{we1-w*ginLN}
	w(e(1))-w^{*}(g)\in\mathcal{L}(\mathcal{N}).
	\end{equation}
	
	Suppose
	$$
	p=\sum_{\tiny{\begin{array}{c}x,y,a,b,~\lambda(x,y,a,b)=0\\u,w\end{array}}} u(e(1))\cdot e(1)^{x}_{a}f^{\eta(y)}_{y,b}\cdot w(e(1))\in\mathcal{J}(\operatorname{mir1}),
	$$
	where $ u(e(1)),w(e(1)) $ are monomials in $ \mathcal{U}(1) $ and $ \lambda(x,y,a,b)=0 $, we compute:
	\begin{align*}
	p&=\sum\Big(u(e(1))\cdot e(1)^{x}_{a}f^{\eta(y)}_{y,b}\cdot \left(w(e(1))-w^{*}(g)\right)\\
	&\quad+u(e(1))\cdot e(1)^{x}_{a}f^{\eta(y)}_{y,b}w^{*}(g)\Big)\\
	&=\sum\Big(u(e(1))\cdot e(1)^{x}_{a}f^{\eta(y)}_{y,b}\cdot \left(w(e(1))-w^{*}(g)\right)\\
	&\quad+w^{*}(g)~u(e(1))\cdot e(1)^{x}_{a}f^{\eta(y)}_{y,b}\Big).
	\end{align*}
	The second $ "=" $ is true  because $ w^{*}(g) $ is a polynomial in $ \mathcal{U}(2) $,
	which commutes with all of elements in $ \mathcal{U}(1) $. By the equation $ (\ref{we1-w*ginLN})$, we know that
	\begin{equation}	
	u(e(1))\cdot e(1)^{x}_{a}f^{\eta(y)}_{y,b}\cdot \left(w(e(1))-w^{*}(g)\right)\in\mathcal{L}(\mathcal{N}),
	\end{equation}
	and from Lemma \ref{lem3.2} we know
	\begin{equation}
	w^{*}(g)u(e(1))\cdot e(1)^{x}_{a}f^{\eta(y)}_{y,b}\in\mathcal{L}(\mathcal{N}).
	\end{equation}
	Then we conclude that every $ p\in\mathcal{J}(\operatorname{mir1}) $ satisfies $ p\in\mathcal{L}(\mathcal{N}) $, which means $ \mathcal{J}(\operatorname{mir1})\subseteq\mathcal{L}(\mathcal{N}) $.
	
	Similarly,   we have
	\begin{align}
	&\quad e(2)^{y_1}_{b_1}\cdots
	e(2)^{y_t}_{b_t}-f^{\eta(y_t)}_{y_t,b_t}e(2)^{y_1}_{b_1}\cdots e(2)^{y_{t-1}}_{y_{t-1}}\nonumber
	\\&=
	e(2)^{y_1}_{b_1}\cdots e(2)^{y_t}_{b_t}-e(2)^{y_1}_{b_1}\cdots e(2)^{y_{t-1}}_{b_{t-1}}f^{\eta(y_t)}_{y_t,b_t}\nonumber\\
	&=e(2)^{y_1}_{b_1}\cdots e(2)^{y_{t-1}}_{b_{t-1}}(e(2)^{y_t}_{b_t}-f^{\eta(y_t)}_{y_t,b_t})\in\mathcal{L}(\mathcal{N}).	
	\end{align}
	It is also true that
	\begin{eqnarray}
	&\quad w(e(2))-w^{*}(f)
	=e(2)^{y_1}_{b_1}\cdots e(2)^{y_{t-1}}_{b_{t-1}} e(2)^{y_t}_{b_t}\nonumber\\
	&\quad-f^{\eta(y_t)}_{y_t,b_t}f^{\eta(y_{t-1})}_{y_{t-1},b_{t-1}}\cdots f^{\eta(y_1)}_{y_1,y_1}
	\in\mathcal{L}(\mathcal{N}).
	\end{eqnarray}
	Then for
	$$ q=\sum u(e(2))\cdot e(2)^{y}_{b}g^{\xi(x)}_{x,a}\cdot w(e(2))\in\mathcal{J}(\operatorname{mir2}), $$
	we also  have
	\begin{eqnarray}
	q&=\sum\Big(u(e(2))\cdot e(2)^{y}_{b}g^{\xi(x)}_{x,a}\cdot \left(w(e(2))-w^{*}(f)\right) \nonumber\\
&\quad+u(e(2))\cdot e(2)^{y}_{b}g^{\xi(x)}_{x,a}w^{*}(f)\Big) \nonumber\\
	&=\sum\Big(u(e(2))\cdot e(2)^{y}_{b}g^{\xi(x)}_{x,a}\cdot \left(w(e(2))-w^{*}(f)\right)\nonumber\\
&\quad+w^{*}(f)u(e(2))\cdot e(2)^{y}_{b}g^{\xi(x)}_{x,a}\Big)\in\mathcal{L}(\mathcal{N}).
	\end{eqnarray}
	Therefore we have $ \mathcal{J}(\operatorname{mir2})\subseteq\mathcal{L}(\mathcal{N}) $.
\end{proof}

\begin{lem}\label{lem3.4}
	Let $ (\pi,|\psi\rangle) $ be a perfect commuting operator strategy of a regular mirror game $ \mathcal{G} $, then $ |\psi\rangle $ is a tracial state on both $ \pi(\mathcal{U}(1)) $ and $ \pi(\mathcal{U}(2)) $.
\end{lem}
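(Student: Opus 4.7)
The plan is to reduce traciality of $|\psi\rangle$ on $\pi(\mathcal{U}(1))$ to a single-generator cyclic swap identity coming from Lemma~\ref{lem3.1}, and then iterate. Set $\tau(u) := \langle\psi|\pi(u)|\psi\rangle$. Each element $v_{x,a} := e(1)^{x}_{a} - g^{\xi(x)}_{x,a}$ lies in $\mathcal{L}(\mathcal{N})$ by Lemma~\ref{lem3.1}, and it is self-adjoint since $e(1)^{x}_{a}$ is a projection and $g^{\xi(x)}_{x,a}$ is a sum of projections in $\mathcal{U}(2)$. Hence the hypothesis $\pi(\mathcal{L}(\mathcal{N}))|\psi\rangle = \{0\}$, together with its adjoint $\langle\psi|\pi(\mathcal{L}(\mathcal{N}))^* = 0$, yields both
\[
\pi(e(1)^{x}_{a})\,|\psi\rangle = \pi(g^{\xi(x)}_{x,a})\,|\psi\rangle
\quad\text{and}\quad
\langle\psi|\,\pi(e(1)^{x}_{a}) = \langle\psi|\,\pi(g^{\xi(x)}_{x,a}).
\]
The symmetric identities with $e(2)^{y}_{b}$ and $f^{\eta(y)}_{y,b}$ hold by the second half of Lemma~\ref{lem3.1}.

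The key step is the single-generator swap: for any generator $e(1)^{x}_{a}$ and any $b \in \mathcal{U}(1)$ I would compute
\[
\tau(e(1)^{x}_{a}\cdot b) = \langle\psi|\pi(g^{\xi(x)}_{x,a})\pi(b)|\psi\rangle = \langle\psi|\pi(b)\pi(g^{\xi(x)}_{x,a})|\psi\rangle = \tau(b\cdot e(1)^{x}_{a}),
\]
where the outer equalities use the two identities displayed above (applied to the left and right vectors respectively), and the middle equality uses that $\pi(g^{\xi(x)}_{x,a}) \in \pi(\mathcal{U}(2))$ commutes with $\pi(b) \in \pi(\mathcal{U}(1))$ by the defining commutation relations of $\mathcal{U}$.

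To conclude $\tau(uv) = \tau(vu)$ for all $u,v \in \mathcal{U}(1)$, by linearity I may assume $u = e(1)^{x_1}_{a_1}\cdots e(1)^{x_k}_{a_k}$ is a monomial, and then iterate the generator swap, moving one letter at a time to the opposite side of $v$:
\[
\tau(u v) = \tau(e(1)^{x_2}_{a_2}\cdots e(1)^{x_k}_{a_k}\, v\, e(1)^{x_1}_{a_1}) = \cdots = \tau(v\, e(1)^{x_1}_{a_1}\cdots e(1)^{x_k}_{a_k}) = \tau(v u).
\]
The same argument, run with $f^{\eta(y)}_{y,b}$ in place of $g^{\xi(x)}_{x,a}$ and the roles of $\mathcal{U}(1)$ and $\mathcal{U}(2)$ exchanged, yields traciality on $\pi(\mathcal{U}(2))$. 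I do not expect any real obstacle here; the only subtle point is the passage from the given right-annihilation $\pi(\mathcal{L}(\mathcal{N}))|\psi\rangle = 0$ to the corresponding left-annihilation, which comes for free because the specific elements $v_{x,a}$ and $e(2)^{y}_{b} - f^{\eta(y)}_{y,b}$ that we actually need to swap are self-adjoint.
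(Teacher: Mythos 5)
Your proof is correct and follows essentially the same route as the paper's: replace $e(1)^{x}_{a}$ by $g^{\xi(x)}_{x,a}$ using Lemma~\ref{lem3.1} and the annihilation $\pi(\mathcal{L}(\mathcal{N}))|\psi\rangle=\{0\}$, commute $g^{\xi(x)}_{x,a}\in\mathcal{U}(2)$ past the $\mathcal{U}(1)$-factor, and undo the replacement. Your single-letter swap past an arbitrary $b\in\mathcal{U}(1)$ followed by iteration is a slightly more explicit packaging of the paper's ``two-generator swap plus induction on word length,'' and you make visible the self-adjointness of $e(1)^x_a-g^{\xi(x)}_{x,a}$ that the paper uses implicitly to pass from right- to left-annihilation.
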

\begin{proof}
	For the case $ \pi(\mathcal{U}(1)) $, it suffices to show that for any different $ e(1)^{x_1}_{a_1} $ and $ e(1)^{x_2}_{a_2} $, we have
	$$
	\langle\psi|\pi(e(1)^{x_1}_{a_1})\pi(e(1)^{x_2}_{a_2})|\psi\rangle=
	\langle\psi|\pi(e(1)^{x_2}_{a_2})\pi(e(1)^{x_1}_{a_1})|\psi\rangle,
	$$
	and we can complete the proof by using inductions on the length of monomials and linearity.
	
	In fact, since $ (\pi,|\psi\rangle) $ is a perfect commuting operator strategy of $ \mathcal{G} $, we have
	$ \pi(\mathcal{L}(\mathcal{N}))|\psi\rangle=\{0\} $ according to Definition \ref{def2.3} and Proposition \ref{thm2.1}. Lemma \ref{lem3.1} tells us that every $ e(1)^{x}_{a}-g^{\xi(x)}_{x,a}\in\mathcal{L}(\mathcal{N}) $, so we have
	$$
	\pi(e(1)^{x_1}_{a_1}-g^{\xi(x_1)}_{x_1,a_1})|\psi\rangle=0,~~~\text{and}~~~\pi(e(1)^{x_2}_{a_2}-g^{\xi(x_2)}_{x_2,a_2})|\psi\rangle=0.
	$$
	Therefore, we have
	$$
	\begin{array}{lll}
	&\quad\langle\psi|\pi(e(1)^{x_1}_{a_1})\pi(e(1)^{x_2}_{a_2})|\psi\rangle&\\&=
	\langle\psi|\pi(e(1)^{x_1}_{a_1})\pi(g^{\xi(x_2)}_{x_2,a_2})|\psi\rangle\\&=\langle\psi|\pi\left(e(1)^{x_1}_{a_1}g^{\xi(x_2)}_{x_2,a_2}\right)|\psi\rangle&(\pi~\text{is a representation})\\
	&=\langle\psi|\pi\left(g^{\xi(x_2)}_{x_2,a_2}e(1)^{x_1}_{a_1}\right)|\psi\rangle&(g^{\xi(x_2)}_{x_2,a_2}~\text{commutes with}~e(1)^{x_1}_{a_1})\\
	&=\langle\psi|\pi(g^{\xi(x_2)}_{x_2,a_2})\pi(e(1)^{x_1}_{a_1})|\psi\rangle\\
	&=\langle\psi|\pi(e(1)^{x_2}_{a_2})\pi(e(1)^{x_1}_{a_1})|\psi\rangle.
	\end{array}
	$$
	This shows that  $ |\psi\rangle $ is a tracial state on $ \pi(\mathcal{U}(1)) $.
	
	Similarly,~using $ e(2)^{y}_{b}-f^{\eta(y)}_{y,b}\in\mathcal{L}(\mathcal{N}) $, we can prove that $ |\psi\rangle $ is a tracial state on $ \pi(\mathcal{U}(2)) $.
\end{proof}

Now we can prove our main theorem.
\begin{proof}[Proof of Theorem\ref{thm3.2}]
	We show that  $(1) \iff(2)\iff(4)$ and $(1) \iff(3)\iff(5)$.
	
	Firstly, (1) is equivalent to  the existence of a perfect commuting operator strategy by the definition of the determining set.\vspace{1em}
	
	$(1)\Longrightarrow(2)$:
	Suppose $ (\pi,|\psi\rangle) $ is a pair that satisfies the conditions in (1), and we let $ \pi^{\prime} $ be the restriction of $ \pi $ to $ \mathcal{U}(1) $. It is obvious 
	$$
	\pi^{\prime}(\mathcal{J}(\operatorname{mir1}))|\psi\rangle=\pi(\mathcal{J}(\operatorname{mir1}))|\psi\rangle\subseteq\pi(\mathcal{L}(\mathcal{N}))|\psi\rangle=\{0\},
	$$
	where the first $ "=" $ comes from the restriction, the $ "\subseteq" $ is derived  from Lemma \ref{lem3.3}, and the second $ "=" $ is derived  from Proposition \ref{thm2.1}. By Lemma \ref{lem3.4}, We know $ |\psi\rangle $ is a tracial state.  Then $(1)\Longrightarrow(2)$ has been proved.\vspace{1em}
	
	$(2)\Longrightarrow(1)$:
	Using $(\pi^{\prime}, \psi)$,  we  define the following positive linear functional
	$$
	\ell^{\prime}: \mathcal{U}(1) \rightarrow \mathbb{C}, \quad h \mapsto \langle\psi|\pi^{\prime}(h)|\psi\rangle.
	$$
	Since $ |\psi\rangle $ is a tracial state, we know $ \ell^{\prime} $ is tracial. Next extend $\ell^{\prime}$ to a linear functional $\ell$ on $\mathcal{U}$ by mapping a monomial
	$$
	\ell:~w(e(1)) u(e(2)) \mapsto \ell^{\prime}\left(w(e(1)) u^{*}(f)\right),
	$$
	where $$ u^{*}(f)=f^{\eta(y_t)}_{y_t,b_t}f^{\eta(y_{t-1})}_{y_{t-1},b_{t-1}}\cdots f^{\eta(y_1)}_{y_1,b_1} $$
	if $ u(e(2))=e(2)^{y_1}_{b_1}\cdots e(2)^{y_{t-1}}_{b_{t-1}} e(2)^{y_t}_{b_t} $.
	We  show that  $ \ell $ is well-defined. It is sufficient  to show that  $ \ell $ is well defined on $ \mathbb{C} $.  Notice that the regularity ensures that:
	$$
	\sum_{b\in B}f^{\eta(y)}_{y,b}=1, ~\forall y\in Y.~~~~~~~~~~~~(\text{by Lemma \ref{lem2.1}})
	$$
	Then we have
	$$
	\ell\left(\sum_{b\in B}e(2)^{y}_{b}\right)=\ell^{\prime}\left(\sum_{b\in B}f^{\eta(y)}_{y,b}\right)=\langle\psi|\psi\rangle=1,~\forall y\in Y.
	$$
	On the other hand, 
 \[\ell\left(\sum_{b\in B}e(2)^{y}_{b}\right)=\ell(1)=\ell^{\prime}(1)=1.\]
  Therefore,  $ \ell $ is really well defined.
	
	Next we show that $ \ell $ can distinguish $ -1 $ and $ \operatorname{SOS}_{\mathcal{U}}+\mathcal{L}(\mathcal{N})+\mathcal{L}(\mathcal{N})^{*} $. The motivation of this proof is similar to the proof of Theorem 8.3 in \cite{watts2021noncommutative}.
	
	Since $ |\psi\rangle $ is a tracial state, we know that
	$\ell$ is symmetric in a sense that
	$\ell\left(h^*\right)=\ell(h)^*$ for all $h \in \mathcal{U}(1)$.
	To check that $\ell$ is positive, let $h=\sum_{i, j} \beta_{i j} w_i(e(1)) u_j(e(2)) \in \mathcal{U}$, then we have
	$$
	h^{*}h=\sum_{i, j} \sum_{k, s} \beta_{i j}^* \beta_{k s}\cdot w_i^*(e(1)) w_k(e(1)) u_j^*(e(2)) u_s(e(2)),
	$$
	whence
	\begin{equation}\label{eq2}
	\ell\left(h^* h\right)=\sum_{i, j} \sum_{k, s} \beta_{i j}^* \beta_{k s}\cdot\ell^{\prime}\left(w_i^*(e(1)) w_k(e(1)) u_s^*(f) u_j(f)\right) .
	\end{equation}
	Set
	$$
	\check{h}=\sum_{i, j} \beta_{i j} w_i(e(1)) u_j^*(f) \in \mathcal{U}(1)
	$$
	Then we have 
	$$
	\check{h}^* \check{h}=\sum_{i, j} \sum_{k, s} \beta_{i j}^* \beta_{k s} u_j(f) w_i^*(e(1)) w_k(e(1)) u_s^*(f),
	$$
	\begin{equation}\label{eq3}
	\ell^{\prime}\left(\check{h}^* \check{h}\right)=\sum_{i, j} \sum_{k, s} \beta_{i j}^* \beta_{k s} \ell^{\prime}\left(u_j(f) w_i^*(e(1)) w_k(e(1)) u_s^*(f)\right) .
	\end{equation}
	Since $\ell^{\prime}$ is tracial, we have
	$$
	\begin{aligned}
	&\quad\ell^{\prime}\left(w_i^*(e(1)) w_k(e(1)) u_s^*(f) u_j(f)\right)\\&=\ell^{\prime}\left(u_j(f) w_i^*(e(1)) w_k(e(1)) u_s^*(f)\right) .
	\end{aligned}
	$$
	This implies that the values in Equation (\ref{eq2}) and Equation (\ref{eq3})  are the same. Therefore, we have
	$$
	\ell\left(h^* h\right)=\ell^{\prime}\left(\check{h}^* \check{h}\right) \geq 0,
	$$
	which implies
	$ \ell(\operatorname{SOS}_{\mathcal{U}})\geq 0.$
	
	It remains to show that $\ell(\mathcal{L}(\mathcal{N}))=\{0\}$. Elements in $\mathcal{L}(\mathcal{N})$ are linear combinations of monomials of the form
	\begin{equation}\label{eq4}
	w(e(1)) u(e(2)) e(1)^{x}_{a} e(2)^{y}_{b}=w(e(1)) e(1)^{x}_{a} u(e(2)) e(2)^{y}_{b}
	\end{equation}
	with $\lambda(x,y,a,b)=0$. Applying $\ell$ to Equation  (\ref{eq4}) gives that
	$$
	\ell\left(w(e(1)) e(1)^{x}_{a} u(e(2)) e(2)^{y}_{b}\right) =\ell^{\prime}\left(w(e(1)) e(1)^{x}_{a} f^{\eta(y)}_{y,b} u^*(f)\right).
	$$
	But $e(1)^{x}_{a} f^{\eta(y)}_{y,b}\in\mathcal{J}(\operatorname{mir1})$, whence 
\[w(e(1)) e(1)^{x}_{a} f^{\eta(y)}_{y,b} u^*(f)\in \mathcal{J}(\operatorname{mir1}).\]
 Hence we have
	$$
	\ell^{\prime}\left(w(e(1)) e(1)^{x}_{a} f^{\eta(y)}_{y,b} u^*(f)\right)=0.
	$$
	We have proved that $\ell(-1)=-1$ and $$\ell\left(\operatorname{SOS}_{\mathscr{U}}+\mathfrak{L}(\mathcal{N})+\mathfrak{L}(\mathcal{N})^*\right) \subseteq \mathbb{R}_{\geq 0},$$ whence $-1 \notin \operatorname{SOS}_{\mathscr{U}}+\mathfrak{L}(\mathcal{N})+\mathfrak{L}(\mathcal{N})^*$, which implies (1) by  Theorem 4.3 in \cite{watts2021noncommutative}.\vspace{1em}
	
	$(2)\Longrightarrow(4)$: Now we have the pair $ (\pi^{\prime},|\psi\rangle) $, where $ \pi^{\prime}:\mathcal{U}(1)\rightarrow\mathcal{B}(\mathcal{H}) $ is a *-representation and $ |\psi\rangle $ is a tracial state. Now we construct the von Neumann algebra $ \mathcal{W} $ and $ \pi_{0}^{\prime}:\mathcal{U}(1) \rightarrow\mathcal{W}$.
	
	We denote the completion of 
	\(\{\pi^{\prime}(\mathcal{U}(1))|\psi\rangle\}\subseteq\mathcal{H}\)
	as $ \check{\mathcal{H}} $, 
	and it's obvious that $ \check{\mathcal{H}} $ is a closed subspace of $ \mathcal{H} $. 
	Then we have
	$$\pi^{\prime}(\mathcal{U}(1))\check{\mathcal{H}}\subseteq\check{\mathcal{H}},$$ and  $ \pi^{\prime} $ induces a *-representation $ \check{\pi}^{\prime}:~\mathcal{U}(1)\rightarrow\mathcal{B}(\check{\mathcal{H}}) $ naturally.
	We let $ \mathcal{W}=\mathcal{B}(\check{\mathcal{H}}) $ and $ \pi_{0}^{\prime}=\check{\pi}^{\prime} $ as what we desire. Next we'll prove that $ \mathcal{W} $ and $ \pi_{0}^{\prime} $ satisfy the requirement of the item (4).
	
	Firstly notice that $ \mathcal{B}(\check{\mathcal{H}}) $ is a von Neumann algebra because it is closed in the weak operator topology. 
	
	Secondly, since $ |\psi\rangle $ is a tracial state on $ \mathcal{B}(\mathcal{H}) $, $ |\psi\rangle $ is also a tracial state on $ B(\check{\mathcal{H}})$. 
	Thus $ \tau:\mathcal{B}(\check{\mathcal{H}})\rightarrow\mathbb{C}, a\mapsto\langle\psi|a|\psi\rangle $ is a tracial linear functional on $ \mathcal{B}(\check{\mathcal{H}}) $, and
	$ (\mathcal{B}(\check{\mathcal{H}}),\tau) $ is a tracial von Neumann algebra.
	
	Lastly, to show $ \check{\pi}^{\prime}(\mathcal{J}(\operatorname{mir1}))=\{0\} $, it suffices to show the following claim:\\
	For any $u\in\mathcal{U}(1)$ and $ |\phi\rangle=\pi^{\prime}(u)|\psi\rangle\in\check{\mathcal{H}} $, we have
	\[\check{\pi}^{\prime}(\mathcal{J}(\operatorname{mir1}))|\phi\rangle=\{0\}.\]
	
	We have
	$$
	\begin{array}{lll}
	&\quad\check{\pi}^{\prime}(\mathcal{J}(\operatorname{mir1}))|\phi\rangle&\\
	&=\pi^{\prime}(\mathcal{J}(\operatorname{mir1})u)|\psi\rangle&
	(\text{the definition of }\phi)\\
	&=\pi^{\prime}(\mathcal{J}(\operatorname{mir1}))|\psi\rangle&(\mathcal{J}(\operatorname{mir1})~\text{is a two-sided ideal})\\
	&=\{0\}&(\text{by item (2)}).
	\end{array}
	$$
	
	$(4)\Longrightarrow(2)$: We start with the tracial von Neumann algebra $\mathcal{W}$ with trace $\tau$ defined  in (4) and perform a Gelfand-Naimark-Segal (GNS) construction \cite{kadison1983fundamentals}. There is a Hilbert space $\mathcal{K}$, a unit vector $\rho\in\mathcal{K}$, and a *-representation $\pi_{1}^{\prime}: \mathcal{W} \rightarrow \mathcal{B}(\mathcal{K})$ such that
	$$
	\tau(a)=\left\langle\pi_{1}^{\prime}(a)\rho, \rho\right\rangle, \quad a \in \mathcal{W} .
	$$
	Since $\tau$ is a trace, $\rho$ is a tracial state for $\pi_{1}^{\prime}(\mathcal{W})$. Then the *-representation $\pi_{1}^{\prime}\circ\pi_{0}^{\prime}: \mathcal{U}(1) \rightarrow \mathcal{B}(\mathcal{K})$ together with $\rho\in\mathcal{K}$ satisfy (2).
	\vspace{1em}
	
	$(1)\Longrightarrow(3)$ is similar to $(1)\Longrightarrow(2)$, but using  another side of Lemma \ref{lem3.3} and Lemma \ref{lem3.4}.\vspace{1em}
	
	$(3)\Longrightarrow(1)$ is similar to $(2)\Longrightarrow(1)$.  Here, we extend $ \ell^{\prime}:\mathcal{U}(2)\rightarrow\mathbb{C} $ to a linear functional $\ell_1$ from  algebra $\mathcal{U}$ to $\mathbb{C} $: 
	$$
	\ell_1:~u(e(2))w(e(1))\mapsto \ell^{\prime}\left(u(e(2))w^{*}(g)\right).
	$$
	As $ \sum_{a\in A}g^{\xi(x)}_{x,a}=1,~\forall x\in X $, we know that  $ \ell_1 $ is well defined.
	The proof of $ \ell_1(\operatorname{SOS}_{\mathcal{U}(1)})\geq 0 $ is similar to the corresponding part in $(2)\Longrightarrow(1)$.
	
	For the proof of $ \ell_1(\mathcal{L}(\mathcal{N}))=\{0\} $, since elements in $ \mathcal{U}(1) $ commute with those in $ \mathcal{U}(2) $, elements in $ \mathcal{N} $ can also be written as $ e(2)^{y}_{b}e(1)^{x}_{a} $.  Then elements in $\mathcal{L}(\mathcal{N})$ are linear combinations of monomials of the form
	\begin{equation}\label{eq5}
	w(e(1)) u(e(2)) e(2)^{y}_{b} e(1)^{x}_{a}=u(e(2)) e(2)^{y}_{b} w(e(1)) e(1)^{x}_{a}
	\end{equation}
	with $\lambda(x,y,a,b)=0$. Applying the new $\ell_1$ to  (\ref{eq5})  gives that
	$$
	\ell_1\left(u(e(2)) e(2)^{y}_{b} w(e(1)) e(1)^{x}_{a}\right) =\ell^{\prime}\left(u(e(2)) e(2)^{y}_{b}g^{\xi(x)}_{x,a}w^*(g)\right).
	$$
	But $e(2)^{y}_{b} g^{\xi(x)}_{x,a}\in\mathcal{J}(\operatorname{mir2})$, whence $u(e(2)) e(2)^{y}_{b}g^{\xi(x)}_{x,a}w^*(g)\in\mathcal{J}(\operatorname{mir2})$. Therefore we derive that
	$$
	\ell^{\prime}\left(u(e(2)) e(2)^{y}_{b}g^{\xi(x)}_{x,a}w^*(g)\right)=0.
	$$
	i.e. we get $ \ell_1(\mathcal{L}(\mathcal{N}))=\{0\} $.
	Using  Theorem 4.3 in \cite{watts2021noncommutative}, we can show $(3)\Longrightarrow(1)$.\vspace{1em}
	
	Finally,  the proofs of $(3)\Longrightarrow(5)$  and   $(5)\Longrightarrow(3)$ are
	similar to the proofs of $(2)\Longrightarrow(4)$ and $(4)\Longrightarrow(2)$.
\end{proof}
\newtheorem{rmk}{Remark}
\begin{rmk}
	Theorem \ref{thm3.2} may not hold if a mirror game is not regular. For example, let the scoring function $ \lambda=0 $ for all questions and answers. It is not a regular mirror game since
	$ \mathcal{J}(\operatorname{mir1})=\mathcal{J}(\operatorname{mir2})=\{0\}$. Items (2),(4) in Theorem \ref{thm3.2}  always hold.  However,  we can easily verify that  $ \mathcal{G} $ can't have a perfect commuting operator strategy.  Therefore, Theorem \ref{thm3.2} is only true for regular mirror games.

\end{rmk}

In \cite{klep2008connes}, Klep and  Schweighofer show  that Connes’ embedding conjecture on von Neumann algebras is equivalent to the tracial version of the  Positivstellensatz.
See  \cite{BKP2016, klep2016constrained, klep2022optimization} for more recent progress in tracial optimizations.  It has been shown in  Theorem 8.7 \cite{watts2021noncommutative} that, given a *-algebra $ \mathcal{A} $ satisfying   the condition of Archimedean, i.e. for every $a \in \mathcal{A}$, there is  an $\varepsilon \in \mathbb{N}$ with $\varepsilon-a^* a \in \widetilde{\operatorname{SOS}}_{\mathcal{A}}$,
where 	
\begin{equation}
\widetilde{\operatorname{SOS }}_{\mathcal{A}}=\{a\in\mathcal{A}\mid\exists b\in\operatorname{SOS}_{\mathcal{A}}, ~ a-b ~{\text{is a sum of commutators}}\}.
\end{equation}
Then there exists a $*$-representation $\pi: \mathcal{A} \rightarrow \mathcal{B}(\mathcal{H})$ and a tracial state $0 \neq |\psi\rangle \in \mathcal{H}$ satisfying
\begin{equation}\label{eqtr1}
\pi(f) |\psi\rangle=0,~\text{for all}~ f \in \mathfrak{L},
\end{equation}
if and only if
there exists a $*$-representation $\pi: \mathcal{A} \rightarrow \mathcal{F}$ into a tracial von Neumann algebra $(\mathcal{F}, \tau)$ satisfying
\begin{equation}
\tau(\pi(f))=0,~\text{for all}~ f \in \mathfrak{L};
\end{equation}
which is also equivalent to
\begin{equation}\label{eqtr3}
-1 \notin \widetilde{\operatorname{SOS}}_\mathcal{A}+\mathfrak{L}+\mathfrak{L}^*,
\end{equation}

By cyclic unitary generators defined in \cite{watts2021noncommutative}, we can show that both $ \mathcal{U}(1) $ and $ \mathcal{U}(2) $ are group algebra. And by Example 4.4 of \cite{watts2021noncommutative} we know $ \mathcal{U}(1) $ and $ \mathcal{U}(2) $ are Archimedean.
Hence, we can combine the above equivalent condition $(\ref{eqtr1}),~(\ref{eqtr3})$ with item $ (2),(4) $ of our Theorem \ref{thm3.2}. Then we have the following corollary:
\begin{cor}\label{cor3.3}
	A regular mirror game with its universal game algebra $ \mathcal{U} $ and invalid determining set $ \mathcal{N} $ has a perfect commuting operator strategy if and only if
	$ -1\notin\widetilde{\operatorname{SOS}}_{\mathcal{U}(1)}+\mathcal{J}(\operatorname{mir1})+\mathcal{J}(\operatorname{mir1})^* $. For a special case, if a mirror game satisfies
	\begin{equation}
	-1\in\operatorname{SOS}_{\mathcal{U}(1)}+\mathcal{J}(\operatorname{mir1})+\mathcal{J}(\operatorname{mir1})^*,
	\end{equation}
	then it cannot have a perfect commuting strategy.  Similar results hold for $ \mathcal{J}(\operatorname{mir2}) $.
\end{cor}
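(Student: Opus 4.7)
The plan is to derive Corollary \ref{cor3.3} by composing item (2) of Theorem \ref{thm3.2} with Theorem 8.7 of \cite{watts2021noncommutative}, which is precisely the bridge between the representation-theoretic side and the algebraic certificate $-1\notin\widetilde{\operatorname{SOS}}_{\mathcal{A}}+\mathfrak{L}+\mathfrak{L}^*$ for Archimedean *-algebras $\mathcal{A}$ equipped with a left ideal $\mathfrak{L}$. The corollary is essentially a two-step compression of equivalences.

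First I would verify the hypotheses of Theorem 8.7 in the present setting. As flagged in the paragraph preceding the corollary, $\mathcal{U}(1)$ is a group algebra arising from the cyclic unitary generators of \cite{watts2021noncommutative}, and Example 4.4 of the same reference certifies that group algebras of this form are Archimedean. The two-sided ideal $\mathcal{J}(\operatorname{mir1})$ qualifies in particular as a left ideal, so taking $\mathcal{A}=\mathcal{U}(1)$ and $\mathfrak{L}=\mathcal{J}(\operatorname{mir1})$, Theorem 8.7 yields the equivalence between the existence of a *-representation $\pi'$ of $\mathcal{U}(1)$ together with a tracial state $|\psi\rangle$ satisfying $\pi'(\mathcal{J}(\operatorname{mir1}))|\psi\rangle=\{0\}$ and the condition
\begin{equation*}
-1\notin\widetilde{\operatorname{SOS}}_{\mathcal{U}(1)}+\mathcal{J}(\operatorname{mir1})+\mathcal{J}(\operatorname{mir1})^*.
\end{equation*}
Chaining this with $(1)\Leftrightarrow(2)$ of Theorem \ref{thm3.2}, which equates the representation-theoretic condition with the existence of a perfect commuting operator strategy, establishes the main equivalence in the corollary.

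For the concrete sufficient condition for non-existence of a perfect strategy, I would simply invoke the inclusion $\operatorname{SOS}_{\mathcal{U}(1)}\subseteq\widetilde{\operatorname{SOS}}_{\mathcal{U}(1)}$: any $a\in\operatorname{SOS}_{\mathcal{U}(1)}$ lies in $\widetilde{\operatorname{SOS}}_{\mathcal{U}(1)}$ by taking $b=a$ in the definition, since $a-b=0$ is trivially a sum of commutators. Hence a certificate $-1\in\operatorname{SOS}_{\mathcal{U}(1)}+\mathcal{J}(\operatorname{mir1})+\mathcal{J}(\operatorname{mir1})^*$ upgrades to $-1\in\widetilde{\operatorname{SOS}}_{\mathcal{U}(1)}+\mathcal{J}(\operatorname{mir1})+\mathcal{J}(\operatorname{mir1})^*$, and the main equivalence then excludes a perfect commuting operator strategy. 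The matching statements for $\mathcal{J}(\operatorname{mir2})$ follow verbatim by substituting $(1)\Leftrightarrow(3)$ of Theorem \ref{thm3.2} and replacing $\mathcal{U}(1)$ with $\mathcal{U}(2)$, which is Archimedean by the same citation.

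Since the whole argument is a diagram-chase composing two already-established equivalences, no major obstacle is expected. The single point warranting attention is the Archimedean verification for $\mathcal{U}(1)$ and $\mathcal{U}(2)$, which is secured by the referenced example together with the group-algebra structure exhibited via cyclic unitary generators; given this, the rest is bookkeeping.
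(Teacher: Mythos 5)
Your proposal is correct and follows essentially the same route as the paper: invoke Theorem 8.7 of \cite{watts2021noncommutative} applied to the Archimedean *-algebra $\mathcal{U}(1)$ (resp.\ $\mathcal{U}(2)$) with $\mathfrak{L}=\mathcal{J}(\operatorname{mir1})$ (resp.\ $\mathcal{J}(\operatorname{mir2})$), using the group-algebra/cyclic-unitary-generator argument and Example 4.4 of \cite{watts2021noncommutative} for the Archimedean hypothesis, and then chain with item (2) (resp.\ (3)) of Theorem \ref{thm3.2}. Your explicit remark that $\operatorname{SOS}_{\mathcal{U}(1)}\subseteq\widetilde{\operatorname{SOS}}_{\mathcal{U}(1)}$ (taking $b=a$ so that $a-b=0$ is a trivial sum of commutators) correctly fills in the small step that the paper leaves implicit for the ``special case'' clause.
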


Notice that $ \mathcal{J}(\operatorname{mir1}) $ is a two-sided ideal, so that $ \mathcal{J}(\operatorname{mir1})+\mathcal{J}(\operatorname{mir1})^* $ is still a two-sided ideal, generated by
\begin{equation}
\left\{e(1)^{x}_{a}f^{\eta(y)}_{y,b},~f^{\eta(y)}_{y,b}e(1)^{x}_{a}\mid\lambda(x,y,a,b)=0\right\}.
\end{equation}
Then we can use  the noncommutative Gr\"obner basis method to solve this ideal membership problem \cite{mora1986grobner, madlener1998string,levandovskyy2005non,xiu2012non}

\section{A Procedure  for Proving Nonexistence of  Perfect  Strategy}\label{sec4}

According to Corollary \ref{cor3.3},  we can prove that  a regular mirror game $ \mathcal{G} $ doesn't have a perfect commuting operator strategy using noncommutative  Gr\"obner basis and semidefinite programming.

The main steps of the procedure are listed as follows.
\begin{enumerate}[(1)]
	\item Let $ \mathbb{C}\langle e(1)\rangle $ be the free algebra generated by $ \{e(1)^{x}_{a}\mid x\in X,~a\in A\} $, and $ \Pi $ be the canonical projection from $ \mathbb{C}\langle e(1)\rangle $ onto $ \mathcal{U}(1)$. Then $ \Pi^{-1}(\mathcal{J}(\operatorname{mir1})) $ is a two-sided ideal in $ \mathbb{C}\langle e(1)\rangle $, generated by
	\begin{equation}\label{Pi-1Jmir1}
	\begin{aligned}
	&\quad\left\{e(1)^{x}_{a}f^{\eta(y)}_{y,b}\mid\lambda(x,y,a,b)=0\right\}\\&\cup\left\{(e(1)^{x}_{a})^2-e(1)^{x}_{a},~e(1)^{x}_{a_1}e(1)^{x}_{a_2},~\sum_{a\in A}e(1)^{x}_{a}-1\right\}.
	\end{aligned}
	\end{equation}
	Therefore $ \Pi^{-1}(\mathcal{J}(\operatorname{mir1}))+\Pi^{-1}(\mathcal{J}(\operatorname{mir1}))^* $ is a two-sided ideal generated by
	\begin{equation}\label{Pi-1Jmir1*}
	\begin{aligned}
	&\quad\left\{e(1)^{x}_{a}f^{\eta(y)}_{y,b},~f^{\eta(y)}_{y,b}e(1)^{x}_{a}\mid\lambda(x,y,a,b)=0\right\}\\
	&\cup\left\{(e(1)^{x}_{a})^2-e(1)^{x}_{a},~e(1)^{x}_{a_1}e(1)^{x}_{a_2},~\sum_{a\in A}e(1)^{x}_{a}-1\right\}.
	\end{aligned}
	\end{equation}
	\item  We  compute the noncommutative Gr\"obner basis  $ \operatorname{GB} $ \\ of $ \Pi^{-1}(\mathcal{J}(\operatorname{mir1}))+\Pi^{-1}(\mathcal{J}(\operatorname{mir1}))^*$. 
	\begin{enumerate}
		\item   If $ 1\in\operatorname{GB}$, then we have
		\begin{equation}
		-1\in\operatorname{SOS}_{\mathbb{C}\langle e(1)\rangle}+\Pi^{-1}(\mathcal{J}(\operatorname{mir1}))+\Pi^{-1}(\mathcal{J}(\operatorname{mir1}))^*.
		\end{equation}
		Hence, we have
		\begin{equation}
		-1\in\operatorname{SOS}_{\mathcal{U}(1)}+\mathcal{J}(\operatorname{mir1})+\mathcal{J}(\operatorname{mir1})^*,
		\end{equation}
		which implies that the game can't have a perfect strategy.
		\item  Otherwise, we   check whether there exist polynomials \\$s_j \in\mathscr{U}(1)$ such that
		$$
		~1+\sum_{j=1}^{k}s_j^*s_j\in\Pi^{-1}(\mathcal{J}(\operatorname{mir1}))+\Pi^{-1}(\mathcal{J}(\operatorname{mir1}))^*.
		$$
		
		Let $ W_d $ be the column vector composed of monomials in $ \mathbb{C}\langle e(1)\rangle$ having a total degree less than or equal to $d $.  Using an SDP solver to test whether  there exists a positive semidefinite matrix $G$ such that
		\begin{equation}\label{sdp}
		~1+W_d^*GW_d\rightarrow_{\operatorname{GB}}0
		\end{equation}
		\begin{itemize}
			\item If (\ref{sdp}) has a solution $G$, then the mirror game can't have a perfect strategy.
			\item  Otherwise, set $d:=d+1$ and go back.
		\end{itemize}
	\end{enumerate}
\end{enumerate}

\begin{rmk}
Since a free algebra generated by two or more variables is non-Noetherian, Buchberger's procedure for computing  a non-commutative Gr\"obner basis   may not terminate \cite{mora1994introduction,xiu2012non}. Thus our procedure may not terminate in finite steps.

If the procedure stops at  some degree $d$, we can verify that the mirror game has no perfect 
commuting operator strategy. Otherwise, we do not know whether the mirror game has a perfect 
commuting operator strategy.  

In fact, according to \cite[Theorem 5.1]{lupini2020perfect}, 
   an imitation game $\mathcal{G}$ has a perfect commuting operator strategy if and only if a tracial state exists on the $C^*$-algebra $C^*(\mathcal{G})$. By \cite[Remark 2.21]{Fritz2018CuriousPO}, $C^*(\mathcal{G})$ is a free hypergraph C*-algebra, and there is no  algorithm to  determine  whether a free hypergraph C*-algebra has a tracial state \cite[Theorem 3.6]{Fritz2018CuriousPO}. Hence, there is no algorithm that terminates in finite steps to determine whether a mirror game (an imitation game)  has a perfect commuting operator strategy.

\end{rmk}

\vspace{1em}

{Example} \ref{example1} (continued).
{\it 
	Let $ \mathbb{C}\langle e(1)\rangle $ be the free algebra generated by
	$ \{e(1)^{i}_{j}\mid(i,j)\in\{0,1\}^2\} $, and  $ \mathcal{U}(1)$ be the subalgebra of the universal game algebra $ \mathcal{U} $ generated by $\{e(1)^{i}_{j}\mid(i,j)\in\{0,1\}^2\}$. Then we have the natural projection $ \Pi:\mathbb{C}\langle e(1)\rangle\rightarrow \mathcal{U}(1) $.
	
	Notice that $ \mathcal{J}(\operatorname{mir1}) $ is *-closed. Hence,  we have 
 $$ \mathcal{J}(\operatorname{mir1})+\mathcal{J}(\operatorname{mir1})^*=\mathcal{J}(\operatorname{mir1}),$$
 and
	$$
	\begin{aligned}
	&\quad\Pi^{-1}(\mathcal{J}(\operatorname{mir1}))\\
	&=\{e(1)^{0}_{0},~e(1)^{0}_{1},~e(1)^{0}_{0}+e(1)^{0}_{1}-1,~e(1)^{1}_{0}+e(1)^{1}_{1}-1,\\
	&\qquad e(1)^{0}_{0}e(1)^{0}_{1},~e(1)^{0}_{1}e(1)^{0}_{0},~e(1)^{1}_{0}e(1)^{1}_{1},~e(1)^{1}_{1}e(1)^{1}_{0},\\
	&\qquad(e(1)^{0}_{0})^2-e(1)^{0}_{0},~(e(1)^{0}_{1})^2-e(1)^{0}_{1},\\
	&\qquad(e(1)^{1}_{0})^2-e(1)^{1}_{0},~(e(1)^{1}_{1})^2-e(1)^{1}_{1}\}
	\end{aligned}
	$$
	is a two-sided ideal in $ \mathscr{U}(1) $. It is evident that
	$$ -1\in\operatorname{SOS}_{\mathcal{U}(1)}+\mathcal{J}(\operatorname{mir1})\iff -1\in\operatorname{SOS}_{\mathbb{C}\langle e(1)\rangle}+\Pi^{-1}(\mathcal{J}(\operatorname{mir1})) $$
	Using the software NCAlgebra (\url{https://github.com/NCAlgebra/}), 
	we can show that  $ 1 $ is in the Gr\"obner basis of $\Pi^{-1}(\mathcal{J}(\operatorname{mir1})) $
 i.e.,
	$$ -1\in\Pi^{-1}(\mathcal{J}(\operatorname{mir1})).$$
	Therefore, this  game doesn't have a perfect commuting operator strategy.}
\bibliographystyle{ACM-Reference-Format}
\bibliography{document.bib}

\end{document}